\newtheorem{maintheorem}{Theorem}
\newtheorem{maincorollary}[maintheorem]{Corollary}
\newtheorem{T}{Theorem}[section]
\newtheorem{Corollary}[T]{Corollary}
\newtheorem{Lemma}[T]{Lemma}
\newtheorem{Remark}[T]{Remark}
\newtheorem{Definition}[T]{Definition}
\newtheorem{Claim}{Claim}
\def \RR {{\mathbb R}}
\def \NN {{\mathbb N}}
\def \XX {{\mathbb X}}
\def \cl {\mathcal{L}}
\def \cm {\mathcal{M}}
\def \cd {\mathcal{D}}
\def \cp {\mathcal{P}}
\def \cu {\mathcal{U}}
\def \co {\mathcal{O}}
\def \cn {\mathcal{N}}
\def \cR {\mathscr{R}}
\newcommand{\leb}{\operatorname{Leb}}
\newcommand{\dist}{\operatorname{dist}}
\newcommand{\supp}{\operatorname{supp}}
\newcommand{\per}{\operatorname{Per}}
\newcommand{\diameter}{\operatorname{diameter}}
\begin{document}

\thanks{Work carried out at Federal University of
Bahia (UFBA) and at State University of Feira de Santana (UEFS). Partially supported by CNPq-Brazil (PQ 313272/2020-4)}

\author{F Pedreira}
\address{DEXA, UEFS, Campus Universitário, 44100-000 Feira de Santana Bahia, Brazil}
\email{fopedreira@uefs.br}

\author{V Pinheiro}
\address{Instituto de Matematica - UFBA, Av. Ademar de Barros, s/n,
40170-110 Salvador Bahia, Brazil}
\email{viltonj@ufba.br}

\subjclass[2010]{Primary: 37A30, 37C83, 37C40, 37D25, 37E05.}


\keywords{Lorenz maps, super-expanding measures; infinite Lyapunov exponents; positive entropy.}

\date{\today}

\title{Super-expanding measures}

\maketitle

\begin{abstract}
We study the one-dimensional expanding Lorenz maps and show the existence of dense subset $\mathcal{D}$ of Lorens maps such that each $f\in\mathcal{D}$ has an uncountable set of ergodic invariant probabilities with infinite Lyapunov exponent and positive entropy.
Such measures may appear when the singularity has fast recurrence to itself.
Conversely, if the singularity has slow recurrence to itself then the Lorenz map has an upper bound to the Lyapunov exponent of all invariant measures.
\end{abstract}

\tableofcontents
\section{Introduction}\label{Introduction}

One of the most impactful works in the area of Dynamic Systems was with the studies of mathematician and meteorologist Edward Lorenz, published in the Journal of Atmospheric Sciences \cite{L} in 1963. Motivated by an attempt to understand the fundamentals of weather forecasting, he obtained a model for the convection of thermal fluids, given by the system of differential equations
\vspace{-.35cm}
\begin{eqnarray}\label{sistequationsLorenz}
 \dot{x}& = & -\sigma x + \sigma y \nonumber   \\
 \dot{y}& = & \rho x - y -xz    \\
 \dot{z}& = & -\beta z + xy  \nonumber 
\end{eqnarray}
for parameters $\sigma=10$, $\rho =28$ and $\beta = 8/3$. 

The behavior observed by him in system \eqref{sistequationsLorenz}, originated what is now known as a strange attractor, and led many researchers to suggest a geometric model for the Lorenz attractor.
Among them, Afraimovich, Bykov and Shil'nikov \cite{ABS} in 1977, Guckenheimer and Williams \cite{GW,Wi} in 1979, presented a construction of the model, dynamically similar to that of Lorenz, in a  linearized neighborhood, whose origin is a singularity with eigenvalues $\lambda_2 < \lambda_3 < 0 < \lambda_1$ and with expanding condition $\lambda_1 + \lambda_3 >0$ (see Figure~\ref{LorenzFluxo}). 

\begin{figure}
  \begin{center}\includegraphics[scale=.2]{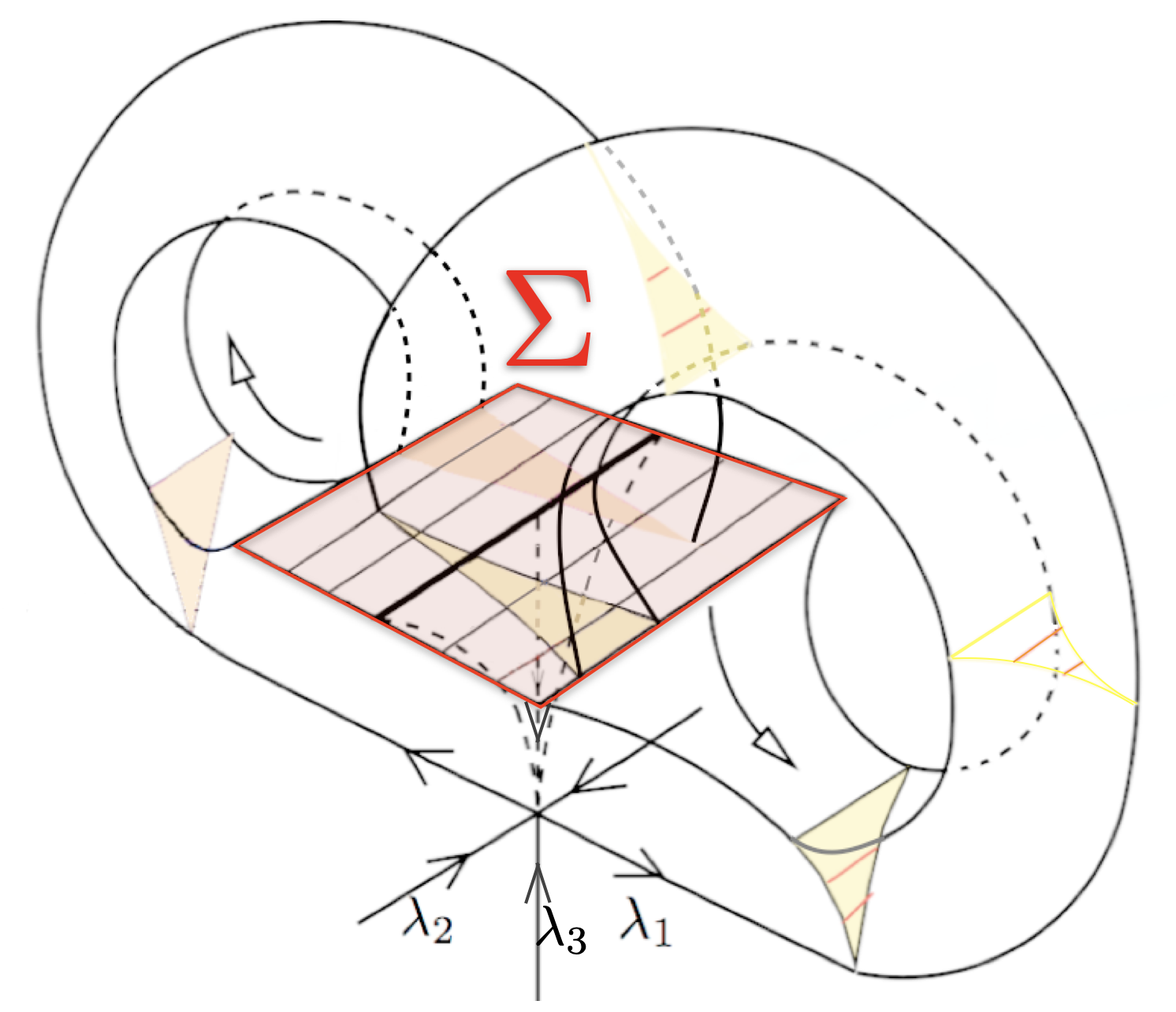}\hspace{0.8cm}\includegraphics[scale=.3]{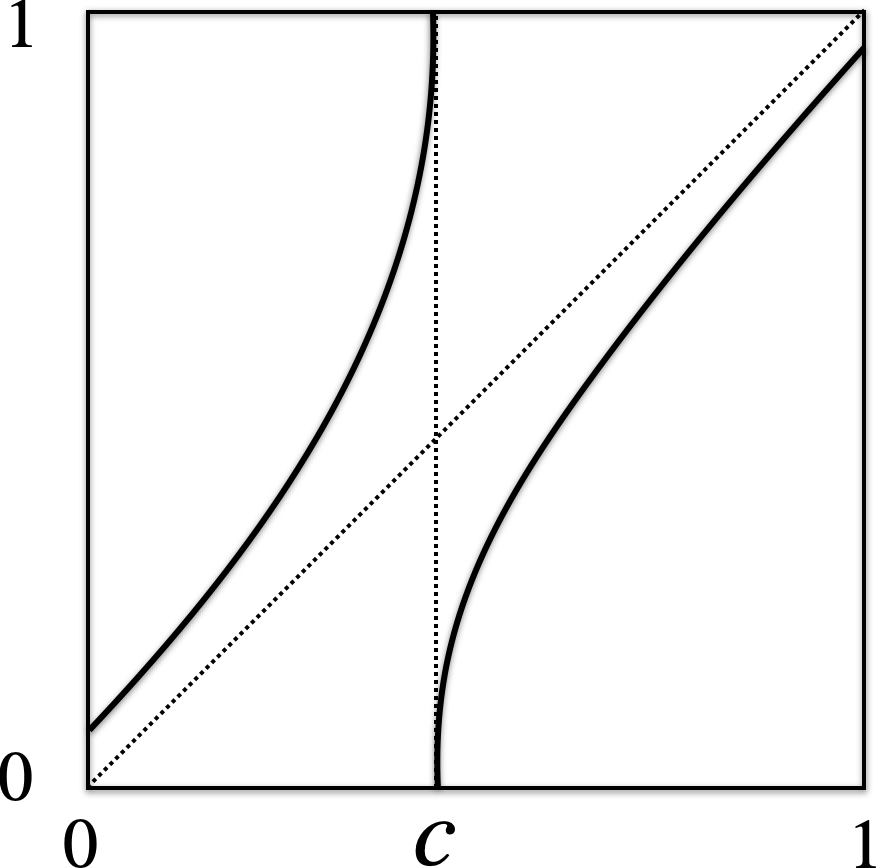}\hspace{1cm}
  \caption{\small On the left side one can see the geometric model of the Lorenz Flow. On the right, we have an expanding Lorenz map with a singularity $c$.}\label{LorenzFluxo}
  \end{center}
\end{figure}

The dynamical behavior is then analyzed by taking a Poincaré first return map $P$, to a cross-section to the flow.
The map $P$ induces a interval map $f$  with a single point of discontinuity,  which is a singularity (see Figure~\ref{LorenzFluxo}).
This type of map is called an {\bf\em expanding Lorenz map}. 
Due to this reduction to an interval dynamics, the properties of the  Geometric Lorenz Attractors have been extensively studied (see for instance in \cite{AP, GH, Sp}).
Furthermore, Tucker proved in \cite{T} that the original Lorenz Attractor is in fact a Geometric Lorenz Attractor.

Although in many aspects the expanding Lorenz maps are well understood, the presence of a singularity add, as shown here, one unexpected feature: the existence of non-periodic super-expanding measures.

When $f$ is an interval map, the {\bf\em Lyapunov exponent}  of a point $x$ is defined as $$\lambda_f(x):=\lim_{n\to+\infty}\frac{1}{n}\log|(f^n)'(x)|=\lim_{n\to+\infty}\frac{1}{n}\sum_{j=0}^{n-1}\log|f'\circ f^j(x)|,$$ whenever this limit exists, otherwise one can consider the upper Lyapunov exponent $\limsup_{n\to+\infty}\frac{1}{n}\sum_{j=0}^{n-1}\log|f'\circ f^j(x)|$.
By Birkhoff, if $\mu$ is an ergodic invariant probability, $\lambda_f(x)$ $=$ $\int_{x\in[0,1]}\log|f'(x)|d\mu$
for $\mu$ almost every point $x$.
Thus, the {\bf\em Lyapunov exponent} of $\mu$ is  $\lambda_f(\mu)=\int\log|f'|d\mu$.
A {\bf\em super-expanding measure} is an ergodic invariant probability  having $+\infty$ as its Lyapunov exponent.

According to Przytycki (Theorem~B in \cite{Pr}), if $f$ is an interval maps and $\mu$ is an ergodic invariant probability having $-\infty$ as its Lyapunov exponent then $\mu$ must be supported on a periodic critical orbit.
In particular, the metric entropy $h_{\mu}(f)$ is zero.
For an expanding Lorenz map $f$ with a singular point $c$ (as in Figure~\ref{LorenzFluxo}), we have that $\lambda_f(\mu)>0$ for every ergodic invariant probability $\mu$.
Furthermore, if the singularity of the Lorenz map is periodic, i.e.,  $f^{\ell}(c_{\pm})=c$ for some $\ell\ge1$ (\footnote{ Given a point $p$ in the interval and $n\ge1$, define  $f'(p_-)=\lim_{x\uparrow p}f'(x)$, $f'(p_+)=\lim_{x\downarrow p}f'(x)$, $f^{n}(p_{-})=\lim_{x\uparrow p}f^{n}(x)$ and $f^{n}(p_{+})=\lim_{x\downarrow p}f^{n}(x)$.}), then $\lambda_f(\mu)=+\infty$ for $\mu=\frac{1}{\ell}\sum_{j=0}^{\ell-1}\delta_{f^j(c_{\pm})}$.
Of course, in this case, $\mu$ has a finite support and zero entropy $($\footnote{ One can extend $f$ to $c$ so that $\mu$ becomes an ergodic invariant probability. For that, set $f(c):=f(c_-)$ if $f^{\ell}(c_-)=c$, otherwise set  $f(c):=f(c_+)$.}$)$.
However, there is no Przytycki-like result  having $+\infty$ as the Lyapunov exponent and so, the existence of ergodic probabilities with infinite Lyapunov exponent and positive entropy have been conjectured for many years.

In \cite{Do}, Dobbs gives two types of maps $g_0,g_1:[0,1]\to[0,1]$ presenting super-expanding measures.
In both cases the maps are conjugated the full tent map $T(x)=1-2|x-1/2|$, that is, $g_j=h_j^{-1}\circ T\circ h_j$ for some homeomorphism $h_j:[0,1]\to[0,1]$.
In the first case, $h_0\in C([0,1])\cap C^{\infty}((0,1))$ and $g_0$ has two singularities $0$ and $1$, $g_0'(0)=+\infty$, $g_0'(1)=-\infty$, and a criticality $g_0'(1/2)=0$ (see Figure~\ref{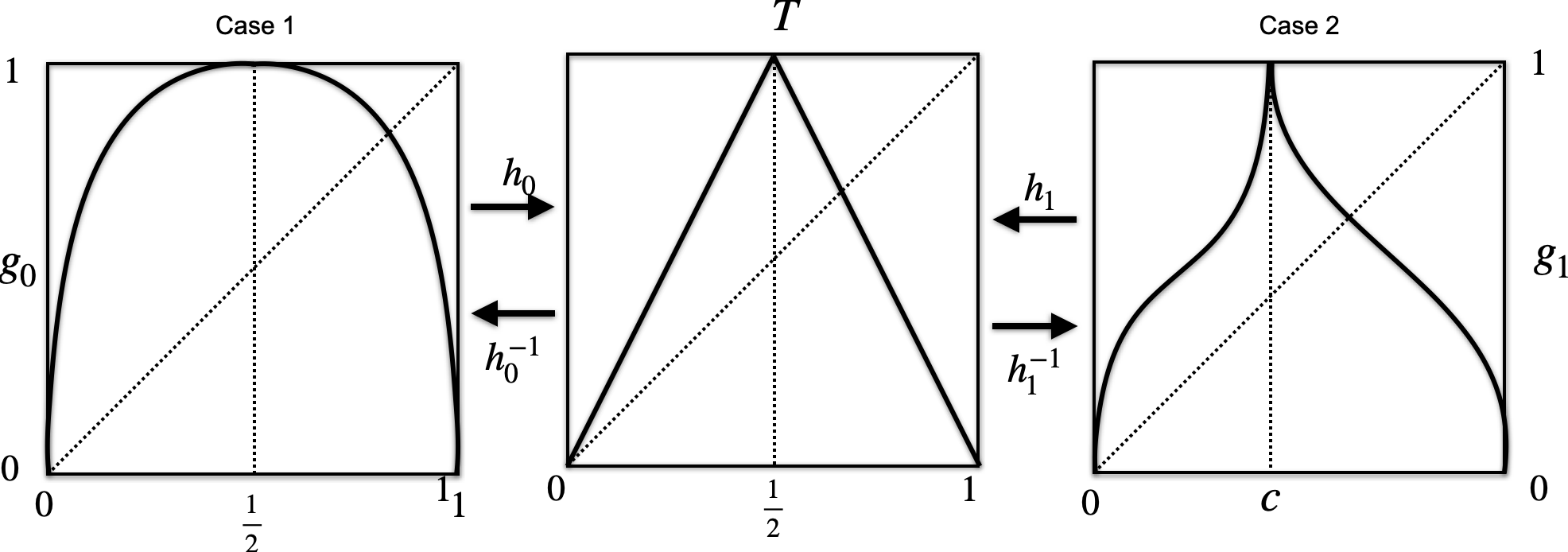}).
In the second case, $g_1$ has three singularities as illustrated on the right side of Figure~\ref{Dobbs.png}.
Roughly speaking, choosing a suitable $h_j$ as well as the order $\alpha$ of the singularity $0$ ($g_j(x)\approx x^{\alpha}$ for $x\approx0$), 
Dobbs has able to show that $\mu_{g_j}:=\leb\circ\,h_j$ is a super-expanding measure for $g_j$  with full topological entropy $h_{\mu_{g_j}}(g_j)=\log2=h_{top}(g_j)$.
In the first case, $\log|g_0'|$ was not integrable with respect to $\mu_{g_0}$ and 
\begin{equation}\label{EquationDoOlVi}
  -\infty=\liminf_{n\to+\infty}\frac{1}{n}\log|(g_0^n)'(x)|<\limsup_{n\to+\infty}\frac{1}{n}\log|(g_0^n)'(x)|=+\infty
\end{equation}
 $\mu_{g_0}$ almost everywhere.
In the second case, $\liminf_{n}\frac{1}{n}\log|(g_1^n)'(x)|=+\infty$ for $\mu_{g_1}$ almost every $x\in[0,1]$. 
\begin{figure}
  \begin{center}\includegraphics[scale=.38]{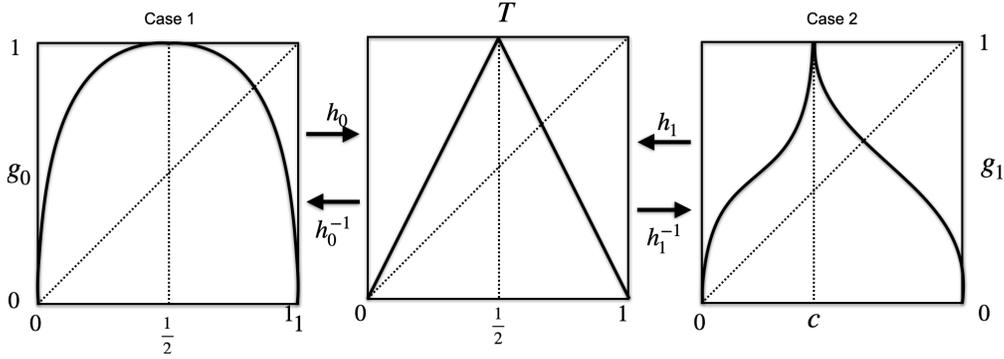}
  \caption{\small On the left side one can see a map $g_0$ with two singularities $0$ and $1$, and a critical point $c=1/2$, this map is conjugated by $h_0$ with the completed tent map $T$ on center of the picture.
  On the right side, the map $g_1$ is conjugated by $h_1$ to $T$ and it has three singularities: $0$, $c$ and $1$.
 According to \cite{Do}, $g_0$ satisfies  equation \eqref{EquationDoOlVi} $\mu_{g_{_0}}\hspace{-0.1cm}$-almost everywhere, where $\mu_{g_{_0}}\hspace{-0.2cm}=\hspace{-0.1cm}\leb\circ\,h_0$, and 
 $\lim_{n}\frac{1}{n}\log|(g_1^n)'(x)|=+\infty$ for $\mu_{g_1}\hspace{-0.15cm}=\leb\circ\,h_1$ almost every $x$.}\label{Dobbs.png}
  \end{center}
\end{figure}
Recently, Olivares-Vinales \cite{OV} consider a  unimodal map $g_0$ conjugated with a tent map with a Fibonacci recurrence of the turning point.
In \cite{OV}, $g_0$ has three non regular points, including a singularity (see Figure~\ref{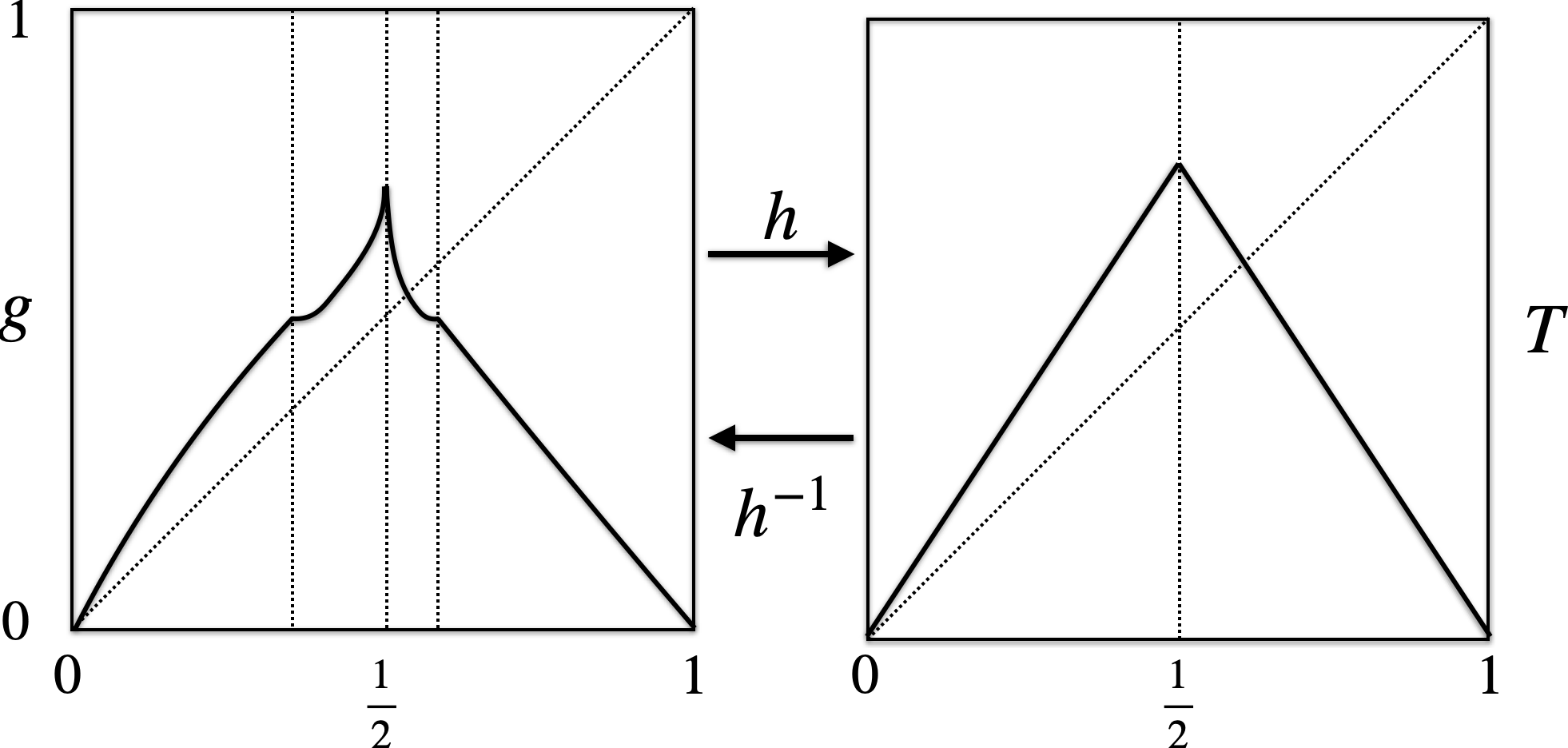}). 
\begin{figure}
  \begin{center}\includegraphics[scale=.28]{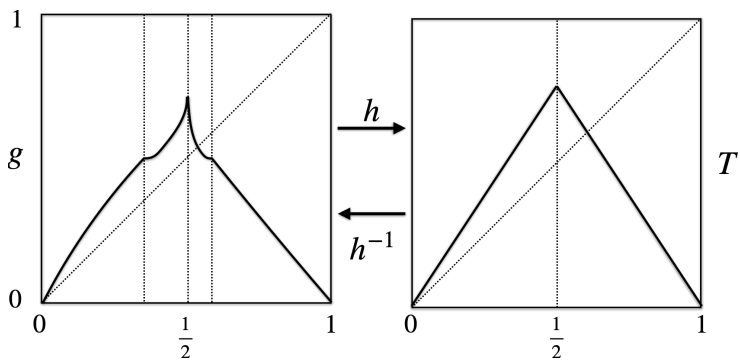}
  \caption{\small On the left side one can see a map $g_0$ with three non-regular points $p$, $q$  and the a singularity $c=1/2$.  The map $g_0$ is conjugated by $h$ with a ``Fibonacci'' tent map $T$, i.e., the turning point $1/2$ has a Fibonacci recurrence to itself. The tent map $T$ is represented on the right side of the figure. According to \cite{OV}, if $\mu_{_T}$ is the unique $T$ invariant probability supported on the minimal set $\overline{\co_T^+(1/2)}$ then $\mu_{g_{_0}}\hspace{-0.15cm}=\mu_{_T}\circ\, h$ is a super-expanding measure, satisfies \eqref{EquationDoOlVi}, with zero entropy.} \label{Olivares-Vinales.png}
  \end{center}
\end{figure}
With that, Olivares-Vinales shows that $\mu_{g_0}:=\mu_{_T}\circ\,h$ is an example of a non-periodic ergodic invariant probability with zero entropy and such that \eqref{EquationDoOlVi} holds for $\mu_{g_0}$-almost every point $x$,
where $h$ is the conjugation of  $g_0$ with the ``Fibonacci tent'' $T$ and $\mu_{_T}$ is unique $T$-invariant probability of the minimal set $\overline{\co_T^+(1/2)}$.

In the papers above, the authors used conjugations to produce, in each example, a super-expanding measure. 
In the present paper, for Lorenz maps, we show the existence of dense set $\mathcal{D}$ of Lorenz maps having uncountable many ergodic invariant probabilities with infinite Lyapunov exponent (super-expanding measures) and positive entropy.
Moreover, $\cd$ contains all Lorenz maps with a periodic singularity (see Section~\ref{SectionSMR}) and, for all maps in $\mathcal{D}$, the supreme of the entropy of the super-expanding measures realizes the topological entropy of the system. 
 
We observe that singularities appear naturally in many dynamical systems, for instance: non convex billiards.
Hence, this work may indicate that such an abundance of super-expanding measures with positive entropy can occur in other important dynamical systems.

\section{Statement of mains results}\label{SectionSMR}

As can be seen in Figure~\ref{doismapas}, one can extend the Lorenz map in the right side of the Figure~\ref{LorenzFluxo} to an interval map with two fixed points.
Because of that, we define an expanding Lorens map as follows.   
A $C^{1+}$ interval map $f:[0,1]\setminus\{c\}  \to [0,1]$, $0< c < 1$, is a called an {\bf\em expanding Lorenz map} if $f(0)=0, f(1)=1, f'(x)\ge \lambda > 1, \forall x \in [0,1]\setminus  \{c\}$ and $ f'(c_{\pm})=+\infty$.
The point $c$ in the definition above is called the {\bf\em singular point} or the {\bf\em singularity} of $f$ and its {\bf\em singular values} are $f(c_-)$ and $f(c_+)$.
An expanding Lorenz map $f$, with singularity $c$, is called {\bf\em non-flat} if there exist constants $\alpha, \beta \in (0,1), d_{0},d_{1} \in [0,1] $ and $C^{1+}$ diffeomorphisms $\phi_{0,f}:[0,c]\rightarrow [0,d_{0}^{1/\alpha}]$ and $\phi_{1,f}:[c,1]\rightarrow [0,d_{1}^{1/\beta}]$ such that, $\phi_{j,f}(c)=0$, $\phi_{0,f}(0)=(d_0)^{1/\alpha}$, $\phi_{1,f}(1)=(d_1)^{1/\beta}$ and 
\begin{equation}\label{Equationjytrdcvbjk}
   f(x) = \left \{ \begin{matrix} d_{0} - (\phi_{0,f}(x))^{\alpha} &
      \mbox{if }x <c,
      \\1- d_{1} + (\phi_{1,f}(x))^{\beta}  & \mbox{if }x > c. \end{matrix} \right.
\end{equation}
where  $f(c_{-})=d_{0}$ and  $f(c_{+})=1- d_{1}$.

A set $\Lambda\subset [0,1]$ satisfies the slow recurrence condition to $c$  if for each $\epsilon >0$ there is a $\delta >0$ such that 
\begin{equation}\label{slowrecurrencLambda}
\limsup_{n\to\infty}\dfrac{1}{n}\sum_{j=0}^{n-1}-\log\dist_{\delta}(f^{j}(x),c)\le \epsilon
\end{equation}
for every $x \in \Lambda $, where $\dist_{\delta}(x,c)$  denotes the $\delta$-truncated distance from $x$ to $c$ defined as $\dist_{\delta}(x,c)=\dist(x, c)  \hbox{ if }   \dist(x,c)\le\delta$, and $\dist_{\delta}(x,c)=1$ otherwise.
An ergodic $f$-invariant probability $\mu$ satisfies \textit{slow recurrent condition} if there is a set $\Lambda$ satisfying \eqref{slowrecurrencLambda} such that $\mu(\Lambda)=1$.

\begin{figure}
  \begin{center}\includegraphics[scale=.4]{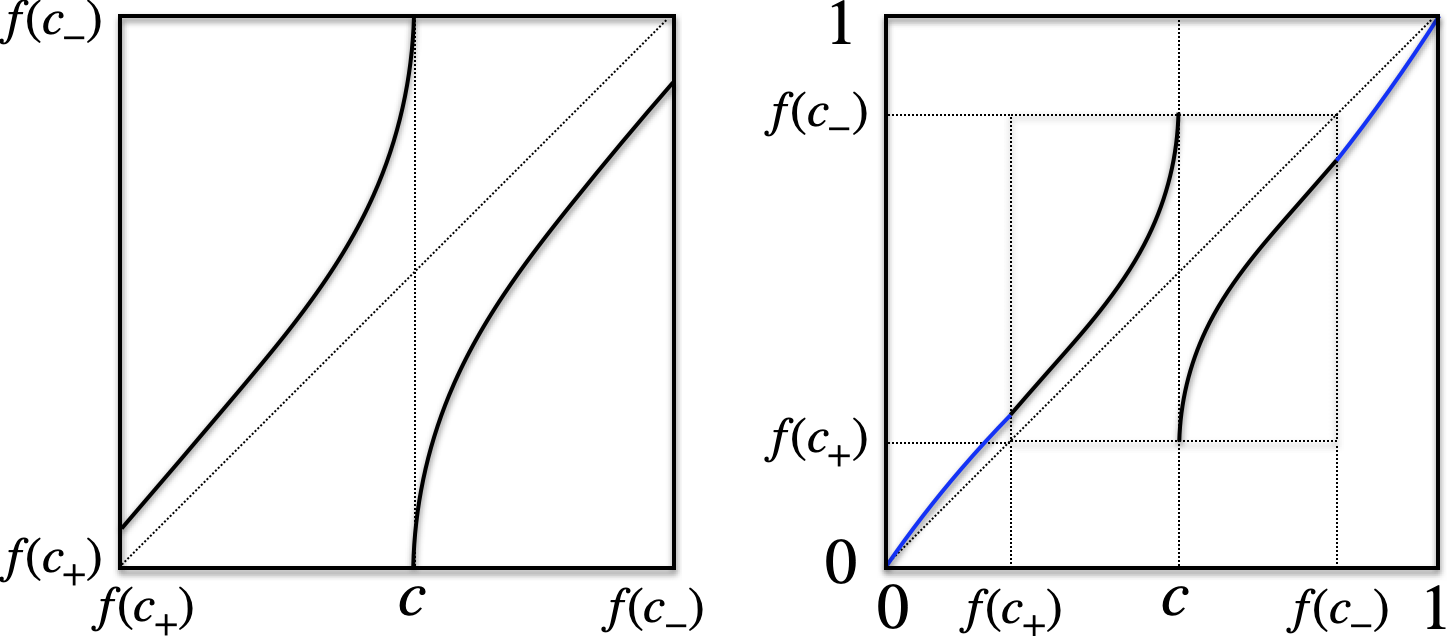}\\
  \caption{\small On the left side we have a Lorenz map from the reduction of Lorenz flow.
  On the right, this map is extended to present two fixed points at the end of the interval.}\label{doismapas}
  \end{center}
\end{figure}

In the study of non-uniform hyperbolicity for system with critical or singular region, the slow recurrence to the critical/singular region plays a crucial  role.
For such systems, it is common to assume a non-degenerated (non-flat) critical/singular region.
For these systems, almost all points, with respect to  invariant probability $\mu$, have slow recurrence to the critical/singular region if and only if they have only finite Lyapunov exponents.
Let $\cm^1(f)$ be the {\bf\em set of all $f$-invariant probabilities}. 

\begin{maintheorem}\label{MainTheoA} Let $f:[0,1]\setminus\{c\}\to[0,1]$ be a non-flat $C^{1+}$ expanding Lorenz map with singularity $c\in(0,1)$.
	Suppose that there exist $t\ge1$  and $r>0$ such that $f^{t}(c_+)=c$  and $f^n(c_-)\notin(c,c+r)$ $\forall n\in\NN$.
	Then there exists an uncountable set $\cm_{_{+\infty}}$ of invariant probabilities $\mu$ such that  
\begin{enumerate}[$($i$)$]
\item $\mu$ is ergodic;
\item $\supp\mu=[f(c_+),f(c_-)]$, i.e., full support on the attractor of $f$;
\item $h_{\mu}(f)>0$, i.e., positive entropy;
\item $\int_{x\in\XX}|\log|x-c||d\mu=+\infty$, i.e., fast recurrence to the singularity;
\item $\lim_n\frac{1}{n}\log (f^n)'(x)=+\infty$ for $\mu$ almost every $x$, i.e., infinite Lyapunov exponent.
\end{enumerate}
Furthermore, $$\sup\{h_{\mu}(f)\,;\,\mu\in\cm_{_{+\infty}}\}=\sup\{h_{\mu}(f)\,;\,\mu\in\cm^1(f)\}.$$
\end{maintheorem}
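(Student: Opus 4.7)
The approach is to realize $\cm_{_{+\infty}}$ as Abramov--Kac projections of Bernoulli measures on a countable-state full-branch induced Markov map. The hypothesis $f^{t}(c_{+})=c$ provides a natural recurrence structure on a right-sided neighborhood of $c$: iterating a local right-inverse of $f^{t}$ starting at $c$ produces a decreasing sequence $x_{0}>x_{1}>x_{2}>\cdots\to c$ with $f^{t}(x_{n+1})=x_{n}$. The non-flat relation $f(x)-(1-d_{1})\asymp(x-c)^{\beta}$ with $\beta\in(0,1)$ from \eqref{Equationjytrdcvbjk} forces super-exponential accumulation $x_{n}-c\asymp\delta_{0}^{\beta^{-n}}$, while the avoidance $f^{n}(c_{-})\notin(c,c+r)$ for all $n$ ensures that the strips $\Delta_{n}=(x_{n},x_{n-1})$ are not cut by the forward orbit of $c_{-}$, so each iterate $f^{j}$, $0\le j\le nt$, is a diffeomorphism on $\Delta_{n}$. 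Fixing a reference Markov cell $\Lambda\subset[f(c_{+}),f(c_{-})]$ bounded away from $c$ and enlarging $\{\Delta_{n}\}_{n\ge 1}$ by finitely many ordinary Markov pieces, I obtain a countable family $\{\Omega_{a}\}_{a\in\ca}$ of disjoint open sets on which $F|_{\Omega_{a}}=f^{R_{a}}:\Omega_{a}\to\Lambda$ is a full-branch diffeomorphism of bounded distortion. The ``deep'' branches $\Omega_{n}$ satisfy $R_{n}\asymp n$ and $\log\sup_{\Omega_{n}}|F'|\asymp\beta^{-n}$.

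For every probability vector $\vec{p}=(p_{a})_{a\in\ca}$ with positive entries summing to $1$, the Bernoulli measure on $\ca^{\NN}$ descends via the coding of $F$ to an $F$-invariant ergodic probability $\nu_{\vec p}$ on $\Lambda$ with $\nu_{\vec p}(\Omega_{a})=p_{a}$. Provided $\sum_{a}p_{a}R_{a}<+\infty$, the Abramov--Kac formula produces the ergodic $f$-invariant probability
\[
\mu_{\vec{p}}\;=\;\frac{1}{\int R\,d\nu_{\vec p}}\sum_{a\in\ca}\sum_{j=0}^{R_{a}-1}(f^{j})_{*}\bigl(\nu_{\vec p}|_{\Omega_{a}}\bigr),
\]
with $h_{\mu_{\vec p}}(f)=-\sum_{a}p_{a}\log p_{a}\big/\int R\,d\nu_{\vec p}$ and $\lambda_{f}(\mu_{\vec p})=\int\log|F'|\,d\nu_{\vec p}\big/\int R\,d\nu_{\vec p}$. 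Selecting $p_{n}\asymp n^{-\theta}$ on the deep branches with $\theta>2$, plus positive weights on finitely many shallow branches, yields simultaneously $\sum_{a}p_{a}R_{a}<+\infty$, $-\sum_{a}p_{a}\log p_{a}\in(0,+\infty)$, and $\sum_{n}p_{n}\log\sup_{\Omega_{n}}|F'|=+\infty$---the last being automatic from $\log\sup_{\Omega_{n}}|F'|\asymp\beta^{-n}$. Hence $h_{\mu_{\vec p}}(f)>0$ and $\lambda_{f}(\mu_{\vec p})=+\infty$; properties (iv) and (v) then follow from Birkhoff together with the asymptotic $\log|f'(x)|\asymp(1-\beta)|\log|x-c||$ near $c$, while (i) is Bernoulli ergodicity and (ii) comes from topological transitivity of $f$ on $[f(c_{+}),f(c_{-})]$ together with the density of $\bigcup_{a,j}f^{j}(\Omega_{a})$. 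Varying $\vec{p}$ inside this admissible class produces uncountably many distinct measures, separated e.g.\ by the values $\nu_{\vec p}(\Omega_{n})=p_{n}$.

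To realize $\sup_{\mu\in\cm_{_{+\infty}}}h_{\mu}(f)=h_{top}(f)$, fix $\epsilon>0$ and truncate $F$ to the finitely many branches with $a\le N$; let $\nu^{(N)}$ be the measure of maximal entropy of the resulting subshift of finite type and $\mu^{(N)}$ its Abramov--Kac projection. The variational principle, together with the fact that the induced scheme is essentially generating for $f$ on its attractor, gives $h_{\mu^{(N)}}(f)\ge h_{top}(f)-\epsilon$ for $N$ large. Perturbing the weight vector of $\mu^{(N)}$ by diverting an arbitrarily small mass $\eta>0$ onto a fat-tailed distribution over the deep branches $\Omega_{n}$ preserves the Bernoulli (hence ergodic) character of $\nu$, keeps $\sum_{a}p_{a}R_{a}<+\infty$ and $\int\log|F'|\,d\nu=+\infty$, and changes the entropy by at most $o_{\eta}(1)$; the projected measure lies in $\cm_{_{+\infty}}$ with entropy at least $h_{top}(f)-2\epsilon$, and letting $\epsilon\to 0$ produces the supremum.

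\emph{Main obstacle.} The most delicate step is the geometric control in the construction of $F$: proving bounded distortion on the deep branches $\Omega_{n}$ despite the super-exponential collapse $|\Omega_{n}|\asymp\delta_{0}^{\beta^{-n}}$ and the blow-up $f'(c_{\pm})=+\infty$. The argument requires a K\"obe-type estimate adapted to the non-flat singularity via \eqref{Equationjytrdcvbjk}, factoring $F|_{\Omega_{n}}$ as a single ``singular'' inverse branch of $f$ through $c$ followed by $C^{1+}$ pieces with uniformly bounded derivatives, and using the avoidance condition on $f^{n}(c_{-})$ to forbid additional near-$c$ crossings inside the orbit segment. A secondary difficulty is justifying that the finite-branch truncations of $F$ really approximate $h_{top}(f)$, which rests on the induced partition being essentially generating for $f$ on the attractor; this can be checked from the Markov-like structure provided by $f^{t}(c_{+})=c$.
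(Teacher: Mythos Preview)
Your overall architecture---projecting Bernoulli measures from a full-branch induced Markov scheme, with polynomial tails on branches accumulating at $c$---is exactly the paper's. But the induced map you describe is not well-posed, and setting it up correctly is where the substance lies. You take $\Lambda$ ``bounded away from $c$'' yet declare the strips $\Delta_n=(x_n,x_{n-1})$ near $c$ to be the branches; since $\Delta_n\not\subset\Lambda$ there is no self-map to iterate and no Bernoulli measure to define. If instead the base is a right-neighbourhood $J=(c,p)$ with first-return map $F$ (full-branch by the hypotheses on $c_\pm$), then one branch is $P_0=(c,q)$ with $F|_{P_0}=f^{t_0}$, but the strips $\Delta_n$ are still not single full branches onto $J$: one has $F^n(\Delta_n)=(q,p)$, which contains countably many $F$-branches, so each $\Delta_n$ refines into countably many cylinders $P\in\mathcal{P}_{c,n}$ on which $F^{n+1}$ is onto $J$. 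On such a $P$ the $f$-induced time is $nt_0+R(F^n(P))$, with the second summand \emph{unbounded} over $\mathcal{P}_{c,n}$; so ``$R_n\asymp n$'' fails for the honest Markov partition, and assigning a single weight $p_n\asymp n^{-\theta}$ per strip is not meaningful (nor would ``finitely many ordinary Markov pieces'' give full support). The paper's remedy is a \emph{two-level} inducing $F_c=F^{R_c}$ (with $R_c\equiv n+1$ on $\Delta_n$), and a mass distribution that puts a polynomial tail $(n-\ell)^{-(2+\alpha)}$ on levels $n>\ell$ while, crucially, distributing mass \emph{within} each $\mathcal{P}_{c,n}$ proportionally to the $F_c$-lift $\mu_c$ of a \emph{prescribed} ergodic $\mu$. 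That proportionality is precisely what keeps $\int R\,d\eta_{\alpha,\ell}<\infty$ (hence the $f$-projection $\mu_{\alpha,\ell}$ exists) despite the unbounded escape times, and letting $\ell\to\infty$ gives $\liminf_\ell h_{\mu_{\alpha,\ell}}(f)\ge h_\mu(f)$, delivering the entropy supremum directly and bypassing your ``essentially generating'' hurdle. Infinite Lyapunov exponent then follows not from $\int\log|F'|\,d\nu$ but via the chain $\int R_c^2\,d\nu_{\alpha,\ell}=\infty\Rightarrow\int R_c\,d\eta_{\alpha,\ell}=\infty$ (an exact-induced-time inequality) together with the pointwise bound $|\log|x-c||\ge K\,R_c(x)$, itself a consequence of $x_n-c\asymp\delta^{\beta^{-n}}$.

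Finally, bounded distortion---which you identify as the main obstacle---plays no role whatsoever. Bernoulli measures are specified combinatorially by cylinder masses, Abramov's formula needs only the Markov structure and integrable induced time, and the Lyapunov computation is purely measure-theoretic; no Koebe-type control appears anywhere in the argument.
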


Given $c,\alpha,\beta\in(0,1)$, let $\cl(c,\alpha,\beta)$ be the set of all expanding Lorenz maps satisfying \eqref{Equationjytrdcvbjk} and consider the metric
$$\dist(f,g)=\sum_{x\in\{c_-,c_+\}}|f(x)-g(x)|+\sum_{j\in\{1,2\}}\|\phi_{j,f}-\phi_{j,g}\|_{C^{1+}}.$$
One can show that $(\cl(c,\alpha,\beta),\dist)$ is a complete metric space.

We say that a Lorenz map $f$ has a {\bf\em periodic singularity} $c$  if $f^{\ell}(c_-)=c=f^{\ell}(c_+)$ for some $\ell\ge1$.
It is well known that $\cl_{\per}(c,\alpha,\beta)$, the set of all $f\in\cl(c,\alpha,\beta)$ with a periodic singularity, is dense in   $\cl(c,\alpha,\beta)$ (see for instance Lemma~\ref{FolkLemma} in Appendix).
Hence, as we can apply Theorem~\ref{MainTheoA} to all $f\in\cl_{\per}(c,\alpha,\beta)$, we get the density of the Lorenz maps satisfying the hypothesis of Theorem~\ref{MainTheoA}.

\begin{Remark}\label{RemarkDense}
The set  $\cd$ of Lorenz maps $f\in\cl(c,\alpha,\beta)$ satisfying the hypothesis of Theorem~\ref{MainTheoA} is dense in  $\cl(c,\alpha,\beta)$.
\end{Remark}

As a consequence of Theorem~\ref{MainTheoA}, residually in the set of all Lorenz maps, the set of measures with arbitrarily large Lyapunov exponents realizes the topological entropy of the map, as one can see in Corollary~\ref{MainCorAA}  below.

\begin{maincorollary}\label{MainCorAA}
The set of all $f\in\cl(c,\alpha,\beta)$ such that $h_{top}(f)=\sup\{h_{\mu}(f)\,;\,\int\log f'd\mu\ge\lambda\}$ for every $\lambda>0$ is a residual subset of $\cl(c,\alpha,\beta)$. 
\end{maincorollary}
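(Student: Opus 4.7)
The plan is to realize the set in the corollary as a dense $G_\delta$ subset of $\cl(c,\alpha,\beta)$. For each $n,k\in\NN$ define
$$U_{n,k}=\Big\{f\in\cl(c,\alpha,\beta)\,;\,\exists\,\mu\in\cm^1(f)\text{ with }h_{\mu}(f)>h_{top}(f)-\tfrac{1}{k}\text{ and }\int\log f'\,d\mu>n\Big\}.$$
The variational principle yields the trivial bound $\sup\{h_{\mu}(f):\int\log f'\,d\mu\ge\lambda\}\le h_{top}(f)$ for every $\lambda\ge0$, so $\bigcap_{n,k}U_{n,k}$ equals the set in Corollary~\ref{MainCorAA}: given $f\in\bigcap_{n,k}U_{n,k}$ and any $\lambda,\epsilon>0$, choosing $n>\lambda$ and $k>1/\epsilon$ produces a witness $\mu\in U_{n,k}$ with $\int\log f'\,d\mu>\lambda$ and $h_{\mu}(f)>h_{top}(f)-\epsilon$, forcing the desired equality for that $\lambda$.

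For density I would show $\cd\subset U_{n,k}$, whence by Remark~\ref{RemarkDense} each $U_{n,k}$ is dense in $\cl(c,\alpha,\beta)$. For $f\in\cd$ the last assertion of Theorem~\ref{MainTheoA} combined with the variational principle gives $\sup\{h_{\mu}(f):\mu\in\cm_{+\infty}\}=h_{top}(f)$, so one can pick a super-expanding measure $\mu\in\cm_{+\infty}$ with $h_{\mu}(f)>h_{top}(f)-\tfrac{1}{k}$. Item~(v) of Theorem~\ref{MainTheoA} together with Birkhoff's theorem (and the fact that $\log f'$ is bounded below by a positive constant) forces $\int\log f'\,d\mu=+\infty>n$, so $f\in U_{n,k}$.

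For openness, fix $f\in U_{n,k}$ with witness $\mu$ and choose $\delta>0$ with $h_{\mu}(f)>h_{top}(f)-\tfrac{1}{k}+3\delta$ and $\int\log f'\,d\mu>n+3\delta$. The plan is to approximate $\mu$ from within by an ergodic measure $\nu$ carried by a compact uniformly expanding $f$-invariant set $K$ bounded away from $c$, satisfying $h_{\nu}(f)>h_{top}(f)-\tfrac{1}{k}+2\delta$ and $\int\log f'\,d\nu>n+2\delta$. Because $K$ is uniformly hyperbolic and avoids the singularity, structural stability furnishes, for every $g$ in a $C^{1+}$-neighborhood of $f$, a continuation $K_g$ together with a near-identity topological conjugacy $\Phi_g:K\to K_g$; the push-forward $\nu_g=(\Phi_g)_{*}\nu$ is ergodic and $g$-invariant, with $h_{\nu_g}(g)=h_{\nu}(f)$ by conjugacy invariance of entropy and $\int\log g'\,d\nu_g\to\int\log f'\,d\nu$ as $g\to f$ by uniform convergence of $\log g'\circ\Phi_g$ to $\log f'$ on $K$. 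Combined with upper semicontinuity of $h_{top}$ on $\cl(c,\alpha,\beta)$ (standard since $f$ is uniformly expanding outside $c$, hence expansive), one concludes $g\in U_{n,k}$ for all $g$ sufficiently close to $f$.

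The principal obstacle is the horseshoe-extraction step. Because $\log f'$ is unbounded only near $c$, the set $K$ must approach $c$ in order to carry a measure with $\int\log f'\,d\nu>n+2\delta$, so the classical Katok horseshoe theorem does not suffice. The plan is to exploit the non-flat form \eqref{Equationjytrdcvbjk} to control $\log f'$ quantitatively on shrinking neighborhoods $B_{\rho}(c)$, isolate a positive $\mu$-measure set of generic points whose Birkhoff averages of $\log f'$ remain above $n+3\delta$ even after excising orbit visits to $B_{\rho/2}(c)$, and finally apply a Pesin-type horseshoe construction adapted to expanding Lorenz maps to build $K\subset\{f'\le M\}$ for a sufficiently large $M=M(n,\delta)$.
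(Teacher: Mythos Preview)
Your framework and your density argument match the paper's: both write the target set as a countable intersection of sets indexed by a Lyapunov threshold and an entropy tolerance, and both use Remark~\ref{RemarkDense} and Theorem~\ref{MainTheoA} to see that each such set contains the dense family~$\cd$.

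The difference is in the openness step, and here the paper takes a shortcut that your proposal misses. You try to prove that each $U_{n,k}$ is open, which forces you to start from an \emph{arbitrary} $f\in U_{n,k}$ with an arbitrary witness measure~$\mu$ and then extract a horseshoe close to~$c$ carrying almost the same entropy and Lyapunov integral. As you correctly identify, this is delicate: the classical Katok theorem does not apply because $\log f'$ is unbounded, and your last paragraph is really a research programme rather than a proof. The paper avoids this entirely. It does not show $U_{n,k}$ is open; instead it constructs, for each $f$, a nearby $g\in\cd$ and then an open neighbourhood of~$g$ contained in $U_{n,k}$. The point is that for $g\in\cd$ the witnesses are not abstract: they come from the explicit full induced Markov map $F=g^{R}$ built in the proof of Theorem~\ref{MainTheoA}. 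Truncating that map to finitely many branches $\{R\le m\}$ yields a \emph{bona fide} finite-branch horseshoe $\Lambda_m=\bigcap_{j\ge0}F^{-j}(\{R\le m\})$ automatically, with no Pesin theory required, and the super-expanding measures of Theorem~\ref{MainTheoA} are approximated by Bernoulli measures supported on these $\Lambda_m$. Structural stability of $\Lambda_m$ then gives the open neighbourhood of~$g$, and the union of these neighbourhoods over all $f$ is the desired open dense set.

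So the gap in your proposal is not fatal in principle---a Katok-type theorem for non-flat singularities may well be provable---but it is unnecessary. Restrict the openness argument to maps $g\in\cd$, and use the Markov partition already produced inside the proof of Theorem~\ref{MainTheoA} to obtain the horseshoes directly; this replaces your last two paragraphs by a one-line appeal to the persistence of a finite sub-shift.
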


In Theorem~\ref{MainTheoB} below, we show that when the orbits of singular values have a condition of slow approximation to the singular region, we obtain that all the ergodic invariant probabilities for the system has slow recurrence to the singularity and has finite Lyapunov exponent.

\begin{maintheorem}\label{MainTheoB}
Let $f$ be a non-flat $C^{1+}$ expanding Lorenz map and $c$ its singularity. If
 \begin{equation}\label{propriedadelimsup}
 \limsup_{n\to\infty}\frac{1}{n}\sum_{j=1}^{n}-\log|f^j(c_{\pm})-c|<+\infty
 \end{equation}
 then there is $\Upsilon >0$ such that $$\int_{x\in[0,1]}|\log|x-c||d\mu\le \Upsilon\;\text{ for every  }\mu\in\cm^1(f).$$
 Moreover, if \eqref{propriedadelimsup} holds then,  every $\mu \in \cm^1(f)$ has slow recurrence to the singularity and $f$ has an upper bound for Lyapunov exponent of the invariant measures.
\end{maintheorem}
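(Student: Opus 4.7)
The plan is to first establish the uniform integral bound $\int|\log|x-c||\,d\mu\le\Upsilon$ for every $\mu\in\cm^1(f)$; the other two conclusions will follow at once. By non-flatness \eqref{Equationjytrdcvbjk}, $\log|f'(x)|=(1-\min(\alpha,\beta))|\log|x-c||+O(1)$ on a neighbourhood of $c$ and $\log|f'|$ is bounded elsewhere, so the Lyapunov bound is equivalent, up to fixed constants, to the integral bound. Slow recurrence of every $\mu\in\cm^1(f)$ will follow by dominated convergence: given $\epsilon>0$ one chooses $\delta>0$ so that $\int|\log|y-c||\mathbb{1}_{|y-c|<\delta}\,d\mu<\epsilon$ and applies Birkhoff to $-\log\dist_{\delta}(\cdot,c)$ on each ergodic component, producing \eqref{slowrecurrencLambda} $\mu$-almost everywhere.

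\textbf{Pointwise Birkhoff estimate via shadowing.} By the ergodic decomposition we may take $\mu$ ergodic, in which case Birkhoff identifies $\int|\log|y-c||\,d\mu$ with $\lim_N\frac1N\sum_{j<N}|\log|f^j(x)-c||$ for $\mu$-a.e.\ $x$, so it suffices to bound this limit uniformly on a set of full $\mu$-measure. We fix a small $\epsilon_0>0$ and, for a generic orbit, decompose $\{0,\dots,N-1\}$ into maximal \emph{free} blocks (where $|f^j(x)-c|\ge\epsilon_0$) and maximal \emph{shadow} blocks of consecutive visits to $A_{\epsilon_0}:=(c-\epsilon_0,c+\epsilon_0)$. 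Free indices contribute at most $|\log\epsilon_0|$ each. The crux is a shadowing estimate for a shadow block starting at index $j$ with entry distance $\epsilon_i:=|f^j(x)-c|$: writing $c_\pm$ for the appropriate side of $c$, non-flatness gives $|f^{j+1}(x)-f(c_\pm)|\le K\epsilon_i^{\min(\alpha,\beta)}$, and the expansion $f'\ge\lambda$ together with bounded distortion along the cylinder of $f^{k}$ containing $f(c_\pm)$ propagates
\[
|f^{j+k}(x)-f^k(c_\pm)|\le\tfrac{1}{2}|f^k(c_\pm)-c|
\]
throughout the block, so that $|\log|f^{j+k}(x)-c||\le|\log|f^k(c_\pm)-c||+\log 2$ for every $k$, while the block length $L_i$ satisfies $L_i\asymp|\log\epsilon_i|$ by the uniform expansion.

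\textbf{Summation and main obstacle.} Summing one shadow block and invoking the hypothesis \eqref{propriedadelimsup}, its contribution is bounded by $|\log\epsilon_i|+\sum_{k=1}^{L_i-1}|\log|f^k(c_\pm)-c||+O(L_i)\le(B+O(1))L_i$, where $B$ is the $\limsup$ in \eqref{propriedadelimsup}. Since distinct shadow blocks are disjoint and $\sum_iL_i\le N$, adding free and shadow contributions yields
\[
\frac{1}{N}\sum_{j<N}|\log|f^j(x)-c||\le B+|\log\epsilon_0|+O(1)+o_N(1),
\]
from which we read off the desired $\Upsilon$. The main obstacle is the shadowing estimate itself: $\epsilon_0$ must be chosen small enough that the shadow's exit time coincides with the block length (keeping the greedy decomposition disjoint), and bounded distortion must persist along the $L_i$-cylinder of $f$ around $f(c_\pm)$ even when $f^k(c_\pm)$ occasionally approaches $c$. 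It is precisely here that \eqref{propriedadelimsup} is used, since it keeps $\sum_{k<L_i}\log|f'(f^k(c_\pm))|=O(L_i)$ and hence yields a uniform distortion constant along each cylinder.
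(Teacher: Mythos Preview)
Your overall strategy matches the paper's: control the Birkhoff average of $|\log|f^j(x)-c||$ by splitting the orbit into free and shadowing pieces, bound the shadowing contribution via the singular orbit and the hypothesis, and then read off slow recurrence and the Lyapunov bound exactly as you describe. The gap is in the decomposition itself. You take a shadow block to be a maximal run of times with $f^j(x)\in A_{\epsilon_0}$ and then assert $L_i\asymp|\log\epsilon_i|$; but for a Lorenz map the singular values $f(c_\pm)$ are the endpoints of the attractor $[f(c_+),f(c_-)]$, hence bounded away from $c$. After a single iterate $f^{j+1}(x)$ lands near $f(c_\pm)$ and therefore far outside $A_{\epsilon_0}$, so your shadow block has length $1$ regardless of how deep the entry $\epsilon_i$ is, and the amortization $|\log\epsilon_i|\lesssim L_i$ collapses. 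Shrinking $\epsilon_0$ only makes the exit happen sooner; it cannot force the exit time from $A_{\epsilon_0}$ to coincide with the shadowing time.

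The paper's remedy is to let the block be the \emph{bound period} $m(p)$: the number of iterates during which the \emph{relative} shadowing $|f^k(p)-f^k(c_\pm)|<\delta\,|f^k(c_\pm)-c|$ persists (Definition~\ref{bound period}). This block follows the singular orbit all over the interval, not just inside a neighbourhood of $c$, and Corollary~\ref{estimaboundperiod} shows its length is indeed $\gtrsim|\log\epsilon_i|$. Note also that this lower bound does not come from uniform expansion (expansion gives only the upper direction $L_i\lesssim|\log\epsilon_i|$): it requires an \emph{upper} bound on $(f^k)'$ along the singular orbit, and that is precisely where \eqref{propriedadelimsup} enters, as your final paragraph correctly anticipates. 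With the blocks redefined as bound periods, your summation becomes essentially the paper's Corollary~\ref{contsomaatem} and the computation in the proof of Theorem~\ref{MainTheoB}.
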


A {\bf\em SRB-measure} is an ergodic invariant probability with its {\bf\em basin of attraction} 
$$\mathcal{B}(\mu)=\bigg\{x\in[0,1]\,;\,\lim_{n \rightarrow \infty} \frac{1}{n}\sum_{j=0}^{n-1}\varphi\circ f^{j}(x)=\int\varphi d\mu,\; \forall\,\varphi\in C([0,1])\bigg\}$$
having positive Lebesgue measure.
It is well know that every non-flat $C^{1+}$ expanding Lorenz map has a unique SRB measure.

\begin{maincorollary}\label{MainCorbb}
	If the singular values of a non-fat $C^{1+}$ expanding Lorenz belong to the basin of attraction of the SRB measure then every ergodic invariant probability has slow recurrence to the singularity and an upper bound for its Lyapunov exponent.
\end{maincorollary}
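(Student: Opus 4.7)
The plan is to deduce Corollary~\ref{MainCorbb} from Theorem~\ref{MainTheoB} by verifying hypothesis~\eqref{propriedadelimsup} for both $c_{-}$ and $c_{+}$. Let $\mu_{\mathrm{SRB}}$ denote the unique SRB measure of $f$. Under the assumption, each singular value $x_{\pm} := f(c_{\pm})$ lies in $\mathcal{B}(\mu_{\mathrm{SRB}})$, so the empirical measures
\begin{equation*}
\nu_n^{\pm} \;:=\; \frac{1}{n}\sum_{j=0}^{n-1}\delta_{f^j(x_{\pm})}
\end{equation*}
converge weak-$*$ to $\mu_{\mathrm{SRB}}$. Condition~\eqref{propriedadelimsup} for $c_{\pm}$ is exactly $\limsup_n \int -\log|y-c|\,d\nu_n^{\pm}(y) < +\infty$, so this is what needs to be established; Theorem~\ref{MainTheoB} then yields the slow recurrence and the Lyapunov upper bound for every $\mu \in \cm^1(f)$.

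A first key input is that $\int -\log|y-c|\,d\mu_{\mathrm{SRB}}(y) < +\infty$. For a non-flat $C^{1+}$ expanding Lorenz map, $\mu_{\mathrm{SRB}}$ is absolutely continuous with respect to Lebesgue, and the singularities of its density $\rho$ can only accumulate near the singular values $d_{0}$ and $1-d_{1}$ (and their forward orbits), so $\rho$ is locally bounded in a neighborhood of $c$. Combined with the Lebesgue integrability of $-\log|y-c|$ on $[0,1]$, this gives the required finiteness.

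The second step promotes the weak-$*$ convergence into an upper bound on the unbounded observable $-\log|y-c|$. I would use the layer-cake identity
\begin{equation*}
\int -\log|y-c|\,d\nu_n^{\pm}(y) \;=\; \int_{0}^{\infty}\nu_n^{\pm}\bigl(\{y : |y-c|<e^{-t}\}\bigr)\,dt,
\end{equation*}
together with the pointwise bound $\limsup_n \nu_n^{\pm}(\{|y-c|<e^{-t}\}) \le \mu_{\mathrm{SRB}}(\{|y-c|\le e^{-t}\}) \le C e^{-t}$, which follows from weak-$*$ convergence on continuity sets of $\mu_{\mathrm{SRB}}$ together with the local boundedness of $\rho$ near $c$. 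The main obstacle is exchanging $\limsup_n$ with the $dt$-integral over $[0,\infty)$: reverse Fatou handles each finite interval $[0,T]$, where $\nu_n^{\pm}$ is dominated by the integrable constant $1$, but the tail $\int_T^{\infty}\nu_n^{\pm}(\{|y-c|<e^{-t}\})\,dt$ must be shown to be uniformly small in $n$ for $T$ large. I would address this using the non-flat structure and uniform expansion of $f$, which yields a uniform a priori control on the proportion of iterates $j < n$ with $f^j(x_{\pm}) \in (c-\epsilon, c+\epsilon)$ in terms of $\epsilon$. With the tail so controlled, one concludes that $\limsup_n \int -\log|y-c|\,d\nu_n^{\pm} \le \int -\log|y-c|\,d\mu_{\mathrm{SRB}} < +\infty$, verifying~\eqref{propriedadelimsup} and completing the reduction to Theorem~\ref{MainTheoB}.
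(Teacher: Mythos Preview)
Your overall strategy --- verify hypothesis~\eqref{propriedadelimsup} and invoke Theorem~\ref{MainTheoB} --- is exactly the paper's. Where you diverge is in establishing $\int|\log|x-c||\,d\mu_{\mathrm{SRB}}<\infty$. The paper does not argue via regularity of the density; instead it uses that $\mu_{\mathrm{SRB}}\ll\leb$ together with the Ledrappier--Young entropy formula and the variational principle to get $\int\log f'\,d\mu_{\mathrm{SRB}}=h_{\mu_{\mathrm{SRB}}}(f)\le h_{top}(f)<\infty$, and then the non-flatness inequality~\eqref{explyapandrecslow} converts this into integrability of $|\log|x-c||$. This is shorter and more robust. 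Your density argument, by contrast, has a gap: for a transitive expanding Lorenz map the forward orbits of the singular values may be dense in $[f(c_+),f(c_-)]$, so the spikes of the invariant density can accumulate at $c$, and local boundedness of $\rho$ near $c$ does not follow from what you wrote.

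On the second step --- passing from weak-$*$ convergence (continuous test functions) to the unbounded observable $-\log|x-c|$ --- you are more scrupulous than the paper, which simply writes $\lim_n\frac{1}{n}\sum_{j<n}|\log|f^j(f(c_\pm))-c||=\int|\log|x-c||\,d\mu_{\mathrm{SRB}}$ without further comment. Your layer-cake plan is reasonable, but the key claim that uniform expansion and non-flatness alone give a uniform-in-$n$ bound on $\nu_n^{\pm}\big((c-\epsilon,c+\epsilon)\big)$ tending to $0$ with $\epsilon$ is not substantiated; nothing in those hypotheses prevents an individual orbit from visiting a small one-sided neighbourhood of $c$ with frequency bounded below independently of $\epsilon$. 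So this step remains a gap in your argument as well.
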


A direct consequence of Corollary~\ref{MainCorAA} and \ref{MainCorbb} is that, generically for the expanding Lorens maps, the statistical behavior of the singular values is not ruled by the SRB measure. 

\begin{maincorollary}
For a generic $f\in\cl(c,\alpha,\beta)$ the singular values are not contained in the basin of attraction of the SRB measure. 
\end{maincorollary}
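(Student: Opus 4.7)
The plan is to deduce the corollary straight from Corollaries~\ref{MainCorAA} and \ref{MainCorbb}, which, taken together, are manifestly incompatible whenever both singular values flow into the SRB measure. The residual set produced by Corollary~\ref{MainCorAA} will itself serve as the generic set the statement asks for.

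More concretely, I would fix $f$ in the residual set $\cR$ furnished by Corollary~\ref{MainCorAA}. The preparatory step is to note that $h_{top}(f)>0$: since $f$ is an expanding Lorenz map we have $f'\ge\lambda>1$, and the usual entropy estimates for piecewise uniformly expanding interval maps give $h_{top}(f)\ge\log\lambda$. Next, I would unpack the defining property of $\cR$: for every $k\in\NN$ the identity
$$h_{top}(f)=\sup\{h_{\mu}(f)\,;\,\mu\in\cm^1(f),\;\textstyle\int\log f'\,d\mu\ge k\}$$
holds. Because the left-hand side is strictly positive, the set of $\mu\in\cm^1(f)$ with $\int\log f'\,d\mu\ge k$ cannot be empty for any $k$, otherwise the supremum would be $-\infty$. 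Consequently $\sup_{\mu\in\cm^1(f)}\int\log f'\,d\mu=+\infty$, i.e.\ $f$ carries invariant probabilities of arbitrarily large Lyapunov exponent.

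Now I would argue by contradiction: suppose both singular values $f(c_-)$ and $f(c_+)$ belong to the basin of the unique SRB measure of $f$. Corollary~\ref{MainCorbb} then produces a single constant $\Upsilon>0$ such that $\int\log f'\,d\mu\le\Upsilon$ for \emph{every} $\mu\in\cm^1(f)$, in direct contradiction with the unboundedness obtained in the previous paragraph. Hence for every $f\in\cR$ at least one singular value fails to lie in the basin of the SRB measure, which is precisely the statement of the corollary.

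The only point that deserves explicit care, and what I would flag as the (mild) main obstacle, is the positivity of $h_{top}(f)$: it is exactly what guarantees that the supremum from Corollary~\ref{MainCorAA} is taken over a non-empty family of measures, and without it the whole argument is vacuous. This positivity is immediate from the uniform expansion built into the definition of an expanding Lorenz map, so no delicate estimate is required; the rest of the proof is pure bookkeeping.
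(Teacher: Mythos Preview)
Your proposal is correct and is essentially the argument the paper has in mind: the authors state the corollary as ``a direct consequence of Corollary~\ref{MainCorAA} and~\ref{MainCorbb}'' without further proof, and your write-up simply spells out that implication, including the needed observation that $h_{top}(f)>0$ so that the supremum in Corollary~\ref{MainCorAA} is taken over a non-empty set.
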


\section{Slow recurrence and finite Lyapunov exponents}\label{teoremaB}

In this section $f:[-1,1] \setminus  \{0\} \rightarrow [-1,1]$ is a $C^{1+}$ non-flat  expanding Lorenz map.
We  assume $c=0$, which  facilitates the computations, and then it would be enough to make a change of coordinates. 
The {\bf\em pre-orbit} of a point $x\in[0,1]$ is the set $\mathcal{O}^{-}_{f}(x):=\bigcup_{n\ge 0}f^{-n}(x)$.
For a point $x\in[0,1]\setminus \mathcal{O}^{-}_{f}(c)$, denote the forward orbit of $x$ by $\mathcal{O}^{+}_{f}(x)=\{f^j(x);j\ge 0\}$.
And, if $\exists \ell \ge 1$ such that $f^\ell (c_{-})=c$ (we can assume that $\ell$ is the smallest positive integer with this property), we define $\mathcal{O}^{+}_{f}(c_{-})=\{f^j(c_{-}); 1 \le j \le \ell \}$, similarly we defined $\mathcal{O}^{+}_{f}(c_{+})$.
 It is well known that, for every expanding Lorenz map $f$, $[f(c_{+}),f(c_{-})]$ is topologically transitive and it attracts every point of the interval $(0,1)$. 

  \begin{Remark}\label{nonflatderivadaf}
If $x<c$, it follows from the definition of non-flat, equation (\ref{Equationjytrdcvbjk}), that  $$|f'(x)|=\alpha\dfrac{|\phi_{0}'(-x)|}{|\phi_{0}(-x)|^{1-\alpha}}=\dfrac{1}{|x|^{1-\alpha}}\left(\alpha\dfrac{|\phi_{0}'(-x)| |x|^{1-\alpha}}{|\phi_{0}(-x)|^{1-\alpha} }\right)= \dfrac{\varphi_{0}(x)}{|x|^{\alpha_{0}}}                                                                                                                                                                                                                                                                                                                                                                                                                                                                                                                                                                                                                                                                                                                                                                                                                                                                                                                                                                                                                                                                                                                                                                       , $$
 where $\alpha_{0}=1-\alpha$ and $\varphi_{0}$ is a Hölder homeomorphism. For $x>0$ it follows the same way.
  \end{Remark} 

Hence, there are Hölder homeomorphisms $ \varphi_0:[-1,0] \rightarrow [f'(-1),d_{0}]$ and $\varphi_1:[0,1] \rightarrow [d_{1},f'(1)]$ such that 
 \begin{equation}\label{nonflatderf}
f'(x) =\left\{ \begin{matrix}\varphi_{0}(x)/|x|^{\alpha_{0}} &
      \mbox{if }x <0
      \\ \varphi_{1}(x)/|x|^{\alpha_{1}}  & \mbox{if }x > 0 \end{matrix} \, , \right. 
\end{equation}   
     where $\alpha_{0}, \alpha_{1} \in (0,1) $ and $ d_{0}=\lim_{x\uparrow 0}f'(x)|x|^{\alpha_{0}} , d_{1}=\lim_{x\downarrow 0}f'(x)|x|^{\alpha_{1}} $.

  Thus, from (\ref{nonflatderf}) we can take $a>1$ and  $0 < \alpha \le \beta < 1$ so that 
\begin{equation}\label{nonflatderflimitada}
 \frac{1}{a}|x|^{-\alpha}\le f'(x)\le a |x|^{-\beta}.
\end{equation}   
  
As $f'>0$, we have $\inf \varphi_{i}>0$, $i=0,1$.
So, $\log |\varphi_{i}|$ is Hölder.
Let $C,t>0$ be so that
\begin{equation*}
|\log |\varphi_{i}|(x) - \log |\varphi_{i}|(y)|\le C |x-y|^t,\  \forall i,
\end{equation*}
and so
\begin{equation}\label{lip}
\frac{|\varphi_{i}(x)|}{|\varphi_{i}(y)|}\le e^{C|x-y|^t}.
\end{equation}

\begin{Remark} For all $x \in \RR \setminus \{0\}, y \in \RR$ and $r>0$, we have
  \begin{eqnarray}\label{obs}
  \left|\dfrac{y}{x}\right|^{r}&\le & 
  \left( \left|\dfrac{y-x}{x}\right| + 1\right) ^{r}
  \le  \left( e^{  \frac{|y-x|}{|x|} }\right)^{r}=  e^{ r |y-x|/|x| }.
   \end{eqnarray}
\end{Remark}

\begin{Remark}
If $xy > 0$ then
 \begin{eqnarray}\label{obs2}
  |x - y |< \frac{|x|}{2} &\Leftrightarrow & \left| 1 - \dfrac{y}{x}\right|< \frac{1}{2}    \Leftrightarrow \frac{1}{2}< \frac{|y|}{|x|} < \frac{3}{2} \Rightarrow \frac{1}{|y|}< \frac{2}{|x|}.
 \end{eqnarray}
 \end{Remark}

From \eqref{lip}, \eqref{obs} and \eqref{obs2} we obtain a certain control of the derivatives of $f$ in each branch of its domain. For this, consider $\alpha= \alpha_{i}, \varphi = \varphi_{i}$ e $d= d_{i}$ for each case when $x < 0$ or $x > 0$. Thus, $ f'(x)= \varphi (x)/|x|^{\alpha}$ and $\varphi (0) = d >0$.

\begin{Lemma}\label{lemacontroledistn} There is $\gamma >0$ such that for every $n\ge 1$ and every $x, y$ so that $|f^{j}(x)- f^{j}(y)|\le \frac{|f^{j}(x)|}{2}$ and $f^{j}(x)f^{j}(y)>0$ for every $0 \le j <n $, we have
\begin{equation}\label{distn}
 \dfrac{|(f^{n})'(y)|}{|(f^{n})'(x)|}\le  e^{\gamma \sum_{j=0}^{n-1}\big(\frac{| f^{j}(x)- f^{j}(y)|}{|f^{j}(x)|}\big)^t}.
\end{equation}
\end{Lemma}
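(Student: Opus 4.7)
The plan is to apply the chain rule and reduce the ratio of derivatives to a product of one-step ratios, each of which splits naturally into a H\"older piece and a power piece thanks to the factorization $f'(z)=\varphi(z)/|z|^\alpha$ provided in \eqref{nonflatderf}. Set $x_j:=f^j(x)$ and $y_j:=f^j(y)$. Since $x_jy_j>0$ for every $0\le j<n$, both points lie in the same branch at each iterate, so one may use the same pair $(\varphi,\alpha)$ (belonging to $\{(\varphi_0,\alpha_0),(\varphi_1,\alpha_1)\}$) at each index $j$. By the chain rule,
\[
\log\frac{|(f^n)'(y)|}{|(f^n)'(x)|}=\sum_{j=0}^{n-1}\Big[\log\frac{\varphi(y_j)}{\varphi(x_j)}+\alpha\log\frac{|x_j|}{|y_j|}\Big].
\]

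Next I would bound each summand individually. For the H\"older piece, \eqref{lip} gives $\log(\varphi(y_j)/\varphi(x_j))\le C|x_j-y_j|^t$. For the power piece, \eqref{obs} applied with the roles of $x$ and $y$ swapped yields $(|x_j|/|y_j|)^\alpha\le e^{\alpha|x_j-y_j|/|y_j|}$, and then \eqref{obs2} (whose hypotheses hold by assumption, namely $x_jy_j>0$ and $|x_j-y_j|\le|x_j|/2$) upgrades this to $\alpha|x_j-y_j|/|y_j|\le 2\alpha|x_j-y_j|/|x_j|$. Adding the two bounds gives
\[
\log\frac{|(f^n)'(y)|}{|(f^n)'(x)|}\le\sum_{j=0}^{n-1}\Big[C|x_j-y_j|^t+2\alpha\,\frac{|x_j-y_j|}{|x_j|}\Big].
\]

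Finally I would convert both terms to the target form $\bigl(|x_j-y_j|/|x_j|\bigr)^t$. For the H\"older term, factor $|x_j-y_j|^t=|x_j|^t\bigl(|x_j-y_j|/|x_j|\bigr)^t$ and note $|x_j|^t\le 1$ since the phase space is bounded by $1$. For the linear term, without loss of generality assume $t\le 1$ (the H\"older exponent of $\log|\varphi_i|$ can be replaced by any smaller positive value at the cost of enlarging $C$, because the domain is bounded); then the hypothesis $|x_j-y_j|/|x_j|\le 1/2<1$ gives $|x_j-y_j|/|x_j|\le\bigl(|x_j-y_j|/|x_j|\bigr)^t$. Setting $\gamma$ to be the maximum of $C+2\alpha_0$ and $C+2\alpha_1$ (over the two branches, with a common H\"older constant $C$) yields \eqref{distn}. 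The whole argument is a routine chaining of the already-stated inequalities \eqref{lip}, \eqref{obs}, \eqref{obs2}; the only subtle point is the reduction to $t\le 1$, which is harmless thanks to the boundedness of the phase space, so no real obstacle appears.
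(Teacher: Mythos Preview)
Your proof is correct and follows essentially the same approach as the paper: split the one-step derivative ratio into a H\"older factor and a power factor via $f'=\varphi/|x|^{\alpha}$, bound each using \eqref{lip}, \eqref{obs}, \eqref{obs2}, convert both pieces to the $(|x_j-y_j|/|x_j|)^t$ form, and then chain-rule the result. The only difference is expository: you make explicit the two normalizations ($|x_j|\le 1$ for the H\"older term and the harmless reduction to $t\le 1$ for the linear term) that the paper's inequality chain in \eqref{dist} uses implicitly.
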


\begin{proof}
Let $ x, y \in [-1,1] \setminus \{0\}$ such that $xy > 0$ and $| x-y | \le \frac{|x|}{2}$.
Then
\begin{eqnarray}\label{dist}
 \dfrac{|f'(y)|}{|f'(x)|}= \dfrac{|\varphi (y)|}{|\varphi (x)|}  \dfrac{|x|^{\alpha}}{|y|^{\alpha}}  & \le & e^{C|y- x|^t} \left| \dfrac{x}{y}\right|^{\alpha} \le  e^{C\big(\frac{|y- x|}{|x|}\big)^t}e^{2\alpha\frac{| x - y|}{|x|}} \nonumber \\ 
 & \le & e^{(C + 2\alpha)\big(\frac{| x - y|}{|x|}\big)^t} \le e^{\gamma\big(\frac{|x - y|}{|x|}\big)^t},
\end{eqnarray}
where we will take $\gamma = C + 2 \max \{ \alpha_{0}, \alpha_{1} \}$.

Consequently, using \eqref{dist}, for every $n \ge 1$ and every $x, y$ such that $| f^{j}(x)- f^{j}(y) | \le \frac{|f^{j}(x)|}{2}$ and $f^{j}(x)f^{j}(y) > 0$ for every $0 \le j < n $, we have
\begin{equation*}
  \frac{|(f^{n})'(y)|}{|(f^{n})'(x)|}= \prod_{j=0}^{n-1} \frac{|f'(f^{j}(y))|}{|f'(f^{j}(x))|}\le \prod_{j=0}^{n-1} e^{\gamma\big(\frac{| f^{j}(x)- f^{j}(y)|}{|f^{j}(x)|}\big)^t} = e^{\gamma \sum_{j=0}^{n-1}\big(\frac{| f^{j}(x)- f^{j}(y)|}{|f^{j}(x)|}\big)^t}.
\end{equation*}
\end{proof}

\begin{Definition}\label{bound period}
Fix $0 < \delta \le \frac{1}{2}$ and for any $p\ne 0$, we will call $\delta$-bound period (or simply bound period) of $p$ with $0$ the number $$m(p):= -1 + \min\{j>0\,;\,|f^{j}(p)-f^{j}(0_{p})|\ge\delta|f^{j}(0_{p})|\},$$ 
where
 $$ 0_{p}=
 \begin{cases}
 0_{-} & \text{ if }p<0\\
 0_{+} & \text{ if }p>0
 \end{cases}.$$
\end{Definition}

This means that for every $1 \le j \le m(p)$, we have, either $f^{j}(p)$ and $f^{j}(0_{p}) < 0$ or $f^{j}(p)$ and $f^{j}(0_{p}) > 0$, i.e.,
\begin{equation*}
  \frac{| f^{j}(p)- f^{j}(0_{p})|}{| f^{j}(0_{p})|}< \delta \ \mbox{   and  } \ f^{j}(p)f^{j}(0_{p})>0. 
\end{equation*}
Consequently,
\begin{equation*}
   \sum_{i=1}^{j}\bigg(\frac{| f^{i}(p)- f^{i}(0_{p})|}{| f^{I}(0_{p})|}\bigg)^t<\delta^t j  \  \mbox{for every}\  1 \le j \le m(p),
\end{equation*}
and then of \eqref{distn}, for $x=f(0_{p})$ and $ y=f(p)$ we have
\begin{equation}
   \frac{|(f^{j})'(f(p))|}{|(f^{j})'(f(0_{p}))|} \le e^{\gamma \sum_{i=0}^{j-1}\big(\frac{| f^{i}(f(p))- f^{i}(f(0_{p}))|}{|f^{I}(f(0_{p}))|}\big)^t} < e^{\gamma \delta^t j},   
\end{equation}
for every  $ 1 \le j \le m(p)$. 

Also, as $f^{m(p)+1}\mid  _{(0,p)}$ if $p>0$ or $f^{m(p)+1}\mid  _{(p,0)}$ if $p<0$ is diffeomorphism, for every  $ y \in \{ tp + (1 - t)(0_{p}); t \in [0,1] \}$ and $ 1 \le j \le m(p)$ we have
\begin{equation}\label{contder}
  |(f^{j})'(f(y))| < e^{\gamma \delta^t j}|(f^{j})'(f(0_{p}))|.
\end{equation}

\begin{Remark}\label{contprof}
It follows from \eqref{nonflatderflimitada} that, if  
 $M=\sup_{n\in\NN}\frac{ 1}{ n} \sum_{j=1}^{n}|\log |f^{j}(0_{\pm})||<+\infty$
 then, for each $n\in\NN$, 
 \begin{enumerate}[$($i$)$]
 \item $\prod_{j=1}^{n}|f^{j}(0_{\pm})| \ge  e^{-M n }$, in  particular, $|f^{j}(0_{\pm})|\ge e^{-M n }$ for all  $1 \le j \le n$;
 \item $|(f^{j})'(f(0_{\pm}))|\le e^{b n}$ for all $1\le j\le n$, where $b:= \log a + \beta M$. 
 \end{enumerate}
\end{Remark}

The next result provides us with a relation between the \textit{depth} at which a point is and the \textit{bound period} in which its orbit is linked to the singular orbit. Intuitively, it means that when a point is close to the singularity, we have an estimate for the bound period with the orbit of the singular point.
Thus, assuming that
 \begin{equation}\label{propsup}
   M:=\sup_{n\in\NN}\frac{ 1}{ n} \sum_{j=1}^{n}|\log |f^{j}(0_{\pm})||<+\infty,
\end{equation} we have the following consequences.

\begin{Corollary}\label{estimaboundperiod}
Let $0 < A \le \delta  \frac{ 1- \beta}{ a^{2}}$ and $B \ge 2 \gamma \delta^t + 2M +b$.
If $|p|^{1 - \beta} \le Ae^{-Bn}$, with $n\ge 1$, then $m(p) \ge n$.
\end{Corollary}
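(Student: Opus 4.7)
The plan is to prove by induction on $j$ that
\[
|f^{j}(p) - f^{j}(0_{p})| \;<\; \delta\,|f^{j}(0_{p})|\qquad\text{for every } 1\le j\le n,
\]
since by Definition~\ref{bound period} this is exactly the assertion $m(p)\ge n$.

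For the base case $j=1$, I would integrate the upper bound $f'(x)\le a|x|^{-\beta}$ from \eqref{nonflatderflimitada} along the monotone branch containing $p$ and $0_{p}$ to obtain
\[
|f(p)-f(0_{p})| \;\le\; \frac{a}{1-\beta}\,|p|^{1-\beta} \;\le\; \frac{aA}{1-\beta}\,e^{-Bn},
\]
and use Remark~\ref{contprof}(i) for $|f(0_{p})|\ge e^{-Mn}$. The required strict inequality then reduces to $\frac{aA}{(1-\beta)\delta} < e^{(B-M)n}$, which holds at once: the hypothesis $A\le\delta(1-\beta)/a^{2}$ makes the left side at most $1/a<1$, while $B\ge 2\gamma\delta^{t}+2M+b$ makes the exponent on the right strictly positive.

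For the inductive step $j\ge 2$, I would assume the inequality holds for every $1\le i<j$. By the definition of $m(p)$ this gives $m(p)\ge j-1$, and since $\delta\le 1/2$ the inductive bound $|f^{i}(p)-f^{i}(0_{p})|<|f^{i}(0_{p})|/2$ keeps the two orbits on a common side of the singularity, so the observation preceding \eqref{contder} guarantees that $f^{j}$ is a diffeomorphism on the interval $I_{0}$ between $p$ and $0_{p}$. Applying the mean value theorem to $f^{j-1}$ on $f(I_{0})$ then produces some $y\in I_{0}$ with
\[
|f^{j}(p)-f^{j}(0_{p})| \;=\; |(f^{j-1})'(f(y))|\cdot|f(p)-f(0_{p})|.
\]
Using \eqref{contder} with index $j-1$ and then Remark~\ref{contprof}(ii) bounds the derivative by $e^{\gamma\delta^{t}(j-1)}\,|(f^{j-1})'(f(0_{p}))|\le e^{(\gamma\delta^{t}+b)n}$. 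Combining this with the base-case estimate on $|f(p)-f(0_{p})|$ and with $|f^{j}(0_{p})|\ge e^{-Mn}$ from Remark~\ref{contprof}(i), the desired strict inequality reduces to
\[
\frac{aA}{(1-\beta)\delta}\,e^{-(B-\gamma\delta^{t}-b-M)n} \;<\; 1,
\]
and the hypotheses on $A$ and $B$ make the leading factor $\le 1/a<1$ and the exponent $-(B-\gamma\delta^{t}-b-M)\le -(\gamma\delta^{t}+M)<0$.

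The only delicate step is the inductive verification that $I_{0}$ iterates as a diffeomorphism up to time $j$, which is exactly what the inductive bound $|f^{i}(p)-f^{i}(0_{p})|<|f^{i}(0_{p})|/2$ provides by keeping the iterates on a common side of the singularity; once this is in hand, \eqref{contder} and the single-intermediate-point form of the mean value theorem apply, and the rest is exponential bookkeeping tuned to the two numerical hypotheses on $A$ and $B$.
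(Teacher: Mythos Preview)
Your proof is correct and uses essentially the same ingredients as the paper's own proof: the integral estimate $|f(p)-f(0_p)|\le\frac{a}{1-\beta}|p|^{1-\beta}$, the distortion bound \eqref{contder}, and the two parts of Remark~\ref{contprof}. The only difference is that you organize the argument as a direct induction on $j$ while the paper argues by contradiction (assuming $m(p)<n$ and showing the ratio at time $m(p)+1$ is still $<\delta$); both structures lead to the same exponential bookkeeping against the hypotheses on $A$ and $B$.
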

\begin{proof}
Initially note that, by \eqref{nonflatderflimitada},
\begin{equation*}
| f(p)- f(0_{p})|=  \left| \int_{0_{p}}^{p} f'(x)dx \right|\le  \int_{0_{p}}^{p}| f'(x)|dx \le a \int_{0}^{p}| x|^{- \beta}dx = \dfrac{a}{1- \beta} | p|^{1- \beta}
\end{equation*}
By the {\em mean value theorem} and the definition of $m(p)$, we have that 
 \begin{equation*}
| f^{j}(p)- f^{j}(0_{p})|= |(f^{j-1})'(y)| | f(p)- f(0_{p})|,  
\end{equation*}
 for each $1< j \le m(p)+1$, where $y \in (f(0_{p}), f(p))$.
 Then, it follows from \eqref{contder} that
\begin{eqnarray*}
| f^{j}(p)- f^{j}(0_{p})| & \le & e^{\gamma \delta^t (j-1)}\prod_{i=1}^{j-1}|f'(f^{i}(0_{p}))|\  | f(p)- f(0_{p})| \\ & \le &  e^{\gamma \delta^t (j-1)}\prod_{i=1}^{j-1}|f'(f^{i}(0_{p}))|\  \frac{a }{ 1- \beta} |p|^{1 - \beta} \\
   & \le & e^{\gamma \delta^t (j-1)}\prod_{i=1}^{j-1}|f'(f^{i}(0_{p}))|\  \frac{a }{ 1- \beta}A e^{-Bn}\\ & \le & e^{\gamma \delta^t (j-1)}\prod_{i=1}^{j-1}|f'(f^{i}(0_{p}))|\  \frac{\delta }{ a}e^{-Bn}, 
\end{eqnarray*}
for every $ j \le m(p)+1 $.
Suppose by contradiction that $m(p)< n$.
In this case, using item $(ii)$ of the Remark~\ref{contprof}, we have 
 \begin{equation}\label{contint}
 | f^{m(p)}(p)- f^{m(p)}(0_{p})| \le \frac{\delta }{ a}e^{\gamma \delta^t (m(p))}e^{-Bn} e^{bn} <  \frac{\delta }{ a}e^{(\gamma \delta^t -B + b)n}.
 \end{equation}
As $|f'(y)|/|f'(f^{m(p)}(0_p))|< e^{\gamma \delta^t}$ for each $y \in \{ tf^{m(p)}(p) + (1-t)f^{m(p)}(0_p); t \in [0,1] \}$, by  (\ref{contder}), we obtain that
\begin{eqnarray*}
\frac{| f^{m(p)+1}(p)- f^{m(p)+1}(0_{p})|}{ | f^{m(p)+1}(0_{p})|  } & = & \left|\int_{f^{m(p)}(0_{p})}^{f^{m(p)}(p)}\frac{|f'(y)|}{|f'(f^{m(p)}(0_p))|}dy \right|\frac{|f'(f^{m(p)}(0_p))|}{| f^{m(p)+1}(0_{p})|} \\
   & \le & e^{\gamma \delta^t}  | f^{m(p)}(p)- f^{m(p)}(0_{p})|\frac{|f'(f^{m(p)}(0_p))|}{| f^{m(p)+1}(0_{p})|} \\
   & \underset{\eqref{contint}  }{<} & e^{\gamma \delta^t}   \dfrac{\delta }{ a}e^{(\gamma \delta -B + b)n} a \dfrac{|f^{m(p)}(0_p)|^{-\beta}}{| f^{m(p)+1}(0_{p})|} \\
 &  \underset{Remark~\ref{contprof} (i) }{\le}  &  \delta e^{\gamma \delta^t}   e^{(\gamma \delta^t -B + b)n} e^{\beta M n }  e^{M n }   \\
    & < &  \delta e^{(2 \gamma \delta^t + b + 2M - B) n } < \delta,
    \end{eqnarray*} contradicting  the definition of $m(p)$.
    Therefore $m(p) \ge n$ and we conclude the proof.
\end{proof}

\begin{Corollary}\label{contsomaatem} 
If $n\ge 1$ is such that $ Ae^{-B(n+1)}<  |p|^{1 - \beta}  \le Ae^{-Bn}$ then
\begin{equation}\label{contsomaatemmenorgamma}
\sum_{j=0}^{m(p)}|\log|f^j(p)||< \Gamma m(p)
\end{equation}
where $$\Gamma=\dfrac{|\log A|}{1 - \beta}+ 2\dfrac{B}{1 - \beta}+\log \left(\dfrac{1}{1-\delta}\right) + M.$$
\end{Corollary}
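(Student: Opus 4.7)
My plan is to split the sum at $j=0$ and bound the remaining terms $j=1,\dots,m(p)$ by comparing the orbit of $p$ with that of $0_p$, using the defining property of the bound period.

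For the $j=0$ term, I note that since the dynamics maps $[-1,1]\setminus\{0\}$ into $[-1,1]$, each $|f^j(p)|\le 1$, so $|\log|f^j(p)||=-\log|f^j(p)|$ throughout. The lower hypothesis $|p|^{1-\beta}>Ae^{-B(n+1)}$ gives $(1-\beta)(-\log|p|)<-\log A+B(n+1)$, and since the admissible range for $A$ (namely $A\le\delta(1-\beta)/a^2$ with $\delta\le 1/2$) forces $A<1$ and hence $-\log A=|\log A|$, I get
\begin{equation*}
|\log|p||<\frac{|\log A|+B(n+1)}{1-\beta}.
\end{equation*}

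For $1\le j\le m(p)$, Definition~3.3 gives $|f^j(p)-f^j(0_p)|<\delta|f^j(0_p)|$ and $f^j(p)f^j(0_p)>0$, which yields $(1-\delta)|f^j(0_p)|<|f^j(p)|$. Taking $-\log$ and using that both factors are in $(0,1]$, I obtain
\begin{equation*}
|\log|f^j(p)||<|\log|f^j(0_p)||+\log\!\frac{1}{1-\delta}.
\end{equation*}
Summing from $j=1$ to $m(p)$ and applying the standing assumption \eqref{propsup} (which controls $\frac1n\sum_{j=1}^n|\log|f^j(0_\pm)||$ by $M$ for all $n$, so in particular for $n=m(p)$), I get $\sum_{j=1}^{m(p)}|\log|f^j(0_p)||\le M\,m(p)$.

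Combining the two bounds yields
\begin{equation*}
\sum_{j=0}^{m(p)}|\log|f^j(p)||<\frac{|\log A|+B(n+1)}{1-\beta}+\Bigl(M+\log\!\frac{1}{1-\delta}\Bigr)m(p).
\end{equation*}
Corollary~\ref{estimaboundperiod} gives $m(p)\ge n\ge 1$, so $n+1\le 2m(p)$ and $1\le m(p)$, which absorbs the first summand into a multiple of $m(p)$ and produces exactly $\Gamma\,m(p)$. The main thing to be careful about is simply that the constant on the $j=0$ term must be scaled up correctly so that the $\frac{|\log A|}{1-\beta}$ and $\frac{2B}{1-\beta}$ contributions match the stated $\Gamma$; everything else is a routine application of the bound-period inequality together with \eqref{propsup}.
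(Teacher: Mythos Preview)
Your proof is correct and follows essentially the same approach as the paper's own argument: both split off the $j=0$ term, bound it via the lower hypothesis $Ae^{-B(n+1)}<|p|^{1-\beta}$, bound the $j\ge 1$ terms using the defining inequality of the bound period together with \eqref{propsup}, and finish by invoking Corollary~\ref{estimaboundperiod} to get $m(p)\ge n$ so that $n+1\le 2m(p)$. Your explicit remark that the constraints on $A$ force $A<1$ (hence $-\log A=|\log A|$) is a small clarification the paper leaves implicit.
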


\begin{proof}
As 
\begin{equation*}
 |f^j(p)|   >    |f^{j}(0_{p})|- |f^j(p)-f^{j}(0_{p})|  >  (1-\delta) |f^{j}(0_{p})|,\   \forall 1 \le j \le m(p), 
\end{equation*}
 it follows that  
\begin{equation}\label{logfp}
|\log |f^j(p)||   <  \log\left(\dfrac{1}{1-\delta}\right)+ |\log|f^{j}(0_{p})||,\   \forall 1 \le j \le m(p). 
\end{equation}

We know from the previous Corollary that $m(p)\ge n$.
Now from $ Ae^{-B(n+1)}<  |p|^{1 - \beta}$, we have
\begin{eqnarray}\label{logp}
  \log A +(-B)(n+1)  <  (1 - \beta)\log |p|  & \Rightarrow & \left| \dfrac{-\log A}{(1 - \beta)} + \dfrac{B(n+1)}{(1 - \beta)}\right|  >  \left| -\log |p|\right|\nonumber \\
  & \Rightarrow &\dfrac{|\log A|}{(1 - \beta)} + \dfrac{B(n+1)}{(1 - \beta)}  >  |\log |p||.
\end{eqnarray}
As a consequence of \eqref{logfp} and \eqref{logp} it follows that
\begin{eqnarray*}
 \sum_{j=0}^{m(p)}|\log|f^j(p)|| & \le &  |\log |p||+  \sum_{j=1}^{m(p)}|\log|f^j(p)|| \\
 &  <   & |\log |p||+  \sum_{j=1}^{m(p)}\left[\log\left(\dfrac{1}{1-\delta}\right)+ |\log|f^{j}(0_{p})||\right] \\
 &   <  &  \dfrac{|\log A|}{(1 - \beta)} + \dfrac{B(n+1)}{(1 - \beta)} + m(p)\log\left(\dfrac{1}{1-\delta}\right)+ \sum_{j=1}^{m(p)} |\log|f^{j}(0_{p})|| \\
 &  \underset{\eqref{propsup}  }{<}    &   \dfrac{|\log A|}{(1 - \beta)} + (n+1)\dfrac{B}{(1 - \beta)} + m(p).\log\left(\dfrac{1}{1-\delta}\right)+ Mm(p) \\
 &   <  &  \dfrac{|\log A|}{(1 - \beta)} + 2m(p)\dfrac{B}{(1 - \beta)} + m(p)\log\left(\dfrac{1}{1-\delta}\right)+ Mm(p) \\
 &  <   &   \left[\dfrac{|\log A|}{(1 - \beta)} + 2\dfrac{B}{(1 - \beta)} + \log\left(\dfrac{1}{1-\delta}\right)+ M \right] m(p).
\end{eqnarray*}

Therefore,
\begin{equation*}
 \sum_{j=0}^{m(p)}|\log|f^j(p)||< \Gamma m(p).
\end{equation*}
\end{proof}

The goal here is to obtain a control for the recurrence of points to the singularity.
Note that by this last Corollary, whenever a point is in the neighborhood of the singularity $c = 0$ we obtain a control to its orbit until a certain time, the bound period.
And so, we are going to take a $V_{r}$ neighborhood of $0$ for that purpose, as in the Fig~\ref{vizinhancaVr2}.

\begin{figure}
\begin{center}
\includegraphics[scale=.55]{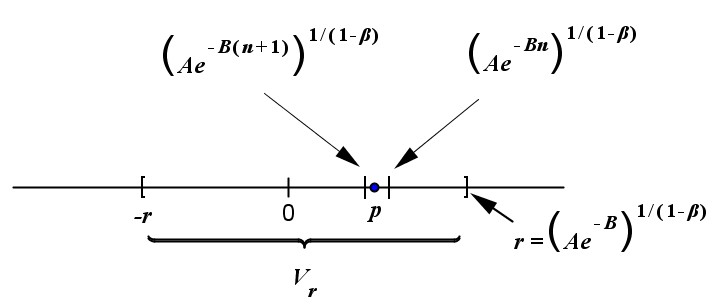}
\caption{\small Neighborhood $V_{r}$ of $0$.}\label{vizinhancaVr2}
\end{center}
\end{figure}

\subsection{Proof of Theorem~\ref{MainTheoB} and Corollary~\ref{MainCorbb}}

\begin{proof}[Proof of the Theorem~\ref{MainTheoB}]

Remember that we are considering $c = 0$ and suppose that $$\limsup_{n}\frac{1}{n}\sum_{j=1}^{n}-\log|f^j(0_{\pm})|<+\infty.$$
That is,
   $0<M:=\sup_{n}\frac{ 1}{ n} \sum_{j=1}^{n}|\log |f^{j}(0_{\pm})||<+\infty$
and so, by Remark~\ref{contprof}, we are allowed to use Corollary~\ref{estimaboundperiod} and Corollary~\ref{contsomaatem}.

Let $r=(Ae^{-B})^{\frac{1}{1- \beta}}$ and $U= \{x \in [-1,1] \setminus \mathcal{O}^{-}_{f}( 0) ; 0 \in \omega_{f}(x)\}  $ and let $R:U \to \{0, 1, 2, \cdots \}$  be the map that gives the first entry time to $V_{r}=[-r,r]$, i.e., $R(x)=\min\{ 0\le j; |f^{j}(x)| \le r \}$.

Given a point $p \in U$, let $\ell_{0}=0$, $r_{0}= R(f^{\ell_{0}}(p))$ and $n_{0}= m(f^{\ell_{0}+ r_{0}}(p))$.
Note that even before the entry time in $V_{r}$, we have the following estimate for the orbit of the point $p$
\begin{equation*}
 \sum_{j=0}^{r_{0}-1}|\log|f^j(p)|| \le r_{0} \log\dfrac{1}{r}
\end{equation*}
and for $p_{0}=f^{ r_{0}}(p)=f^{\ell_{0}+ r_{0}}(p)$, we know from \eqref{contsomaatemmenorgamma}, that until the bound period $n_{0}$ we have 
\begin{equation*}
 \sum_{j=0}^{n_{0}}|\log|f^j(p_{0})|| =   \sum_{j=r_{0}}^{r_{0}+n_{0}}|\log|f^j(p)||< \Gamma n_{0}.
\end{equation*}
Then
\begin{eqnarray*}
 \sum_{j=0}^{\ell_{0}+r_{0}+n_{0}}|\log|f^j(p)|| & = &   \sum_{j=0}^{r_{0}-1}|\log|f^j(p)|| +  \sum_{j=r_{0}}^{\ell_{0}+r_{0}+n_{0}}|\log|f^j(p)|| \\
& \le &  r_{0} \log (1/r)+  \Gamma n_{0}  \\
& \le & ( \ell_{0}+r_{0}+n_{0}) \left(  \log (1/r)+  \Gamma \right)\\
& = & ( \ell_{1}) \left(  \log (1/r)+  \Gamma \right).
\end{eqnarray*}
Inductively, let $\ell_{j+1}=\ell_{j}+r_{j}+n_{j}$, $r_{j+1}=R(f^{\ell_{j+1}}(p))$ and $n_{j+1}= m(f^{\ell_{j+1}+ r_{j+1}}(p))$.
Besides that, we have
\begin{eqnarray*}
 \sum_{j=0}^{\ell_{k}}|\log|f^j(p)|| & = &  \sum_{j=0}^{k}  \sum_{i=\ell_{j}}^{\ell_{j+1}-1}|\log|f^i(p)||  \\
 & = &  \sum_{j=0}^{k} \left( \sum_{i=0}^{r_{j}-1}|\log|f^i(f^{\ell_{j}}(p)||  + \sum_{i=0}^{n_{j}}|\log|f^i(f^{\ell_{j}+r_{j}}(p)||\right)  \\
& \le &  \sum_{j=0}^{k} \left(\sum_{i=0}^{r_{j}-1}\log(1/r)+  n_{j}\Gamma \right)\\
& = &   \sum_{j=0}^{k}\left( r_{j}\log(1/r)+  n_{j}\Gamma \right)\\
& \le & (\ell_{k} +1)\left( \log (1/r)+ \Gamma \right).
\end{eqnarray*}
Therefore,
\begin{equation}\label{liminfcontrolado}
\lim\inf \dfrac{1}{n} \sum_{j=0}^{n-1}|\log|f^j(p)||< \log (1/r)+ \Gamma =\Upsilon,
\end{equation}
for every $p\in [-1,1] \setminus \mathcal{O}^{-}_{f}( 0)$ ( It is clear that the equation~\eqref{liminfcontrolado} also applies to points $p$ such that $0 \notin \omega_{f}(p)$). Thus, by ergodic theorem of Birkhoff, for every ergodic $f$-invariant probability $\mu$, we have
\begin{equation}\label{recurrencecontrolado}
\int_{x\in[-1,1]} |\log |x||d\mu < \Upsilon.
\end{equation}
This means that the logarithm of the distance to the singular point $c=0$ is $\mu$ integrable. Consequently,
\begin{equation*}
\int\log\dist_{\epsilon^{-n}}(x,c)d\mu     \underset{ \delta-trunc  }{=}   \int_{[x; \log\dist(x,0)<-n]} \log |x|d\mu \to 0
\end{equation*}
when $n \to \infty$. And so, by Birkhoff, $\mu$ satisfies the condition of slow recurrence. 

Now, as  $f$ is non-flat, i.e., $ \frac{1}{a}|x|^{-\alpha}\le f'(x)\le a |x|^{-\beta}$, we get for every $\mu \in \cm^1(f)$,
\begin{equation}\label{explyapandrecslow}
 \log \left(\frac{1}{a}\right) + \alpha\int_{x\in[-1,1]} |\log |x||d\mu \le \int_{x\in[-1,1]}\log f'd\mu \le \log a + \beta \int_{x\in[-1,1]}|\log|x||d\mu.
\end{equation} 
Using \eqref{recurrencecontrolado}, it follows
\begin{equation*}\label{explyapfinito}
 0< \int\log f'd\mu < \log a + \beta \Upsilon. 
\end{equation*} 
Therefore, again by Birkhoff's theorem, for every ergodic $f$-invariant probability $\mu$, we obtain
\begin{equation*}\label{explyapfinito}
    \lim_{n\to\infty}\dfrac{1}{n}\log|(f^{n})'(x)| = \int\log f'd\mu \ne \pm \infty,
\end{equation*} 
that is, $\mu$ has  finite Lyapunov exponent. This finishes the proof of the theorem.
\end{proof}

\begin{proof}[Proof of Corollary~\ref{MainCorbb}]
In fact, as $\mu \ll m$, it follows from  the Ledrappier-Young entropy formula and the variational principle that
 $$\int\log f' d\mu = h_{\mu}(f) \le h_{top}(f) < \infty.$$

And as $f$ is non-flat, we have by inequality~\label{explyapandrecslow} that $\int_{x\in[-1,1]} |\log |x||d\mu < \infty$.

So,  if $f(0_{\pm}) \in B(\mu)$ we have
\begin{equation*}
\lim \dfrac{1}{n} \sum_{j=0}^{n-1}|\log|f^j( f(0_{\pm}) )||= \int_{x\in[-1,1]} |\log |x||d\mu < \infty
\end{equation*}
and therefore $f$ satisfies the condition~\eqref{propriedadelimsup}.
\end{proof}



\section{Measures with  infinite Lyapunov exponent and positive entropy }\label{teoremaA}

Let $f:[0,1]\setminus\{c\}\to[0,1]$, $0<c<1$, is a $C^1$ non-flat  expanding Lorenz map, and $\cm^1(f)$ the set of all $f$-invariant Borel probabilities. A $f$-induced map defined on $A\subset [0,1]\setminus\{c\}$ with an induced time $R:A\to\NN:=\{1,2,3,\cdots\}$ is the map $F:A\to [0,1]$ defined by $F(x)=f^{R(x)}(x)$.
An induced time $R:A\to\NN$ is called {\bf\em exact} if  $R(x)=R(f^j(x))+j$ whenever $x\in A$ and $f^j(x)\in A$ for every $0\le j<R(x)$. Note that the {\bf \em first entry times}, is a particular case of an exact time. An induced map $F$ is called {\bf\em orbit-coherent} if  $\co_f^+(x)\cap\co_f^+(y)\ne\emptyset\Longleftrightarrow\co_F^+(x)\cap\co_F^+(y)\ne\emptyset
$ for every $x,y\in\bigcap_{j\ge0}F^{-j}([0,1])$. 

\color{black}

A {\bf\em full induced Markov map} is a triple $(F,B,\cp)$ where $B\subset[0,1]\setminus\{c\}$ is a connected open set, $\cp$ is countable collection of disjoints open subsets of $B$ and  $F:A:=\bigcup_{P\in\cp}P\to B$ is an $f$ induced map satisfying: 
\begin{enumerate}
	\item for each $P\in\cp$, $F|_P$ is a  diffeomorphism between $P$ and $B$ and it can be extended to a homeomorphism sending $\overline{P}$ onto $\overline{B}$;
	\item $\lim_n\diameter(\cp_n(x))=0$ for every $x\in\bigcap_{n\ge1}F^{-n}(B)$, where $\cp_n=\bigvee_{j=0}^{n-1}F^{-j}(\cp)$ and $\cp_n(x)$ is the element of $\cp_n$ containing $x$.
\end{enumerate}

If $F$ is a $f$-induced Markov map with an exact induced time,  then $F$ is orbit-coherent (see Lemma~2.6 of \cite{Pi20}).
Furthermore, when $F$ is orbit-coherent, an ergodic $f$-invariant $\mu$ has at most one $F$-lift and this lift must be a $F$-ergodic probability (Theorem~A of \cite{Pi20}).

A {\bf\em mass distribution} on $\cp$ is a map $m:\cp\to[0,1]$ such that $\sum_{P\in\cp}m(P)=1$.
The {\bf\em $F$-invariant probability $\mu$ generated by the mass distribution} $m$ is the ergodic $F$ invariant probability $\mu$ defined by $$\mu(P_1\cap F^{-1}(P_2)\cap\cdots\cap F^{n-1}(P_n))=m(P_1) m(P_2)\cdots m(P_n),$$ where $P_1\cap F^{-1}(P_2)\cap\cdots\cap F^{n-1}(P_n)\in\bigvee_{j=0}^{n-1}F^{-j}(\cp)$.

\begin{Lemma}[Lemma C.1 of \cite{Pi20}]\label{LemmaExacRtoRR}
Let $f$ be a measure preserving automorphism on a probability space $(\XX,\mathfrak{A},\mu)$ and $F:A\subset\XX\to\XX$ a measurable induced map with induced time $R:A\to\NN$.
Suppose that $\mu$ is $f$ ergodic and that $\nu$ is a $F$-lift of $\mu$.
If $R$ is exact and $\mu(A)=1$ then 
$$\frac{1}{2}\int Rd\nu \int R d\mu\le\int (R)^2 d\nu \le 2\int Rd\nu\int R d\mu.$$
\end{Lemma}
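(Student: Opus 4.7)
The plan is to apply the Kakutani--Rokhlin tower formula
\[
\int_\XX g\, d\mu \;=\; \frac{1}{\int R\, d\nu}\int_A \sum_{j=0}^{R(x)-1} g\bigl(f^j(x)\bigr)\, d\nu(x),
\]
which encodes that $\mu$ is the $f$-lift of the $F$-invariant $\nu$, to the specific choice $g=R$. The key algebraic observation is that exactness of $R$ will collapse the inner sum $\sum_{j=0}^{R(x)-1} R(f^j(x))$ into the triangular number $R(x)(R(x)+1)/2$, producing a clean closed-form identity for $\int R^2\, d\nu$ in terms of $\int R\, d\mu$ and $\int R\, d\nu$, from which both bounds will fall out.

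To license invoking exactness pointwise inside the integral I first need to verify that, for $\nu$-almost every $x\in A$ and every $0\le j<R(x)$, the iterate $f^j(x)$ still lies in $A$. I would obtain this by testing Kakutani's formula on $g=\chi_A$: the left-hand side is $\mu(A)=1$ while the right-hand side equals $\frac{1}{\int R\, d\nu}\sum_{j\ge 0}\nu(\{R>j\}\cap f^{-j}(A))$. Comparing with the tautology $\int R\, d\nu=\sum_{j\ge 0}\nu\{R>j\}$, the assumption $\mu(A)=1$ forces $\nu(\{R>j\}\setminus f^{-j}(A))=0$ for every $j\ge 0$. Exactness then yields $R(f^j(x))=R(x)-j$ for $\nu$-a.e.\ $x$ and each $0\le j<R(x)$.

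With this in hand, plugging $g=R$ (well defined $\mu$-a.e.\ since $\mu(A)=1$) into the Kakutani formula and substituting the identity from the previous paragraph gives
\[
\int R\, d\mu \cdot \int R\, d\nu \;=\; \int_A \frac{R(x)\bigl(R(x)+1\bigr)}{2}\, d\nu(x), \qquad\text{i.e.,}\qquad \int R^2\, d\nu \;=\; 2\int R\, d\mu\cdot\int R\, d\nu \;-\; \int R\, d\nu.
\]
The upper bound of the lemma is then immediate. For the lower bound one uses $R\ge 1$, so $\int R\, d\mu\ge 1$ and hence $\int R\, d\nu\le \int R\, d\mu\cdot\int R\, d\nu$, giving $\int R^2\, d\nu \ge \int R\, d\mu\int R\, d\nu \ge \tfrac{1}{2}\int R\, d\mu\int R\, d\nu$. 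The only nontrivial step I anticipate is the orbit-containment argument in the second paragraph; once that is in place the rest is pure bookkeeping.
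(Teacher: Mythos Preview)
Your argument is correct. The paper does not supply its own proof of this lemma: it is quoted verbatim as Lemma~C.1 of \cite{Pi20} and used as a black box, so there is no in-paper argument to compare against. Your approach---pushing $g=R$ through the tower formula $\int g\,d\mu=\big(\int R\,d\nu\big)^{-1}\int_A\sum_{j=0}^{R-1}g\circ f^j\,d\nu$ after first using $g=\chi_A$ together with $\mu(A)=1$ to force $f^j(x)\in A$ for $\nu$-a.e.\ $x$ and $0\le j<R(x)$, then invoking exactness to collapse the inner sum to $R(R+1)/2$---is exactly the natural proof and in fact yields the sharper identity $\int R^2\,d\nu = 2\int R\,d\mu\int R\,d\nu - \int R\,d\nu$, from which the stated two-sided bound follows with room to spare.
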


The Lemma~\ref{FolkloreResultB} below is a well-known result  (see, for instance, Lemma~4.2 of \cite{Pi20} for a short proof of it).

\begin{Lemma}[Folklore result]\label{FolkloreResultB}
If $F:A\to\XX$ is a measurable $f$ induced map with induced time $R$ and $\nu$ is a $F$ invariant probability then $$\mu:=\sum_{n\ge1}\sum_{j=0}^{n-1}f^j_*(\nu|_{\{R=n\}})=\sum_{j\ge0}f^j_*(\nu|_{\{R>j\}})
$$ is a $f$ invariant measure with $\mu(\XX)=\int R d\nu$.
\end{Lemma}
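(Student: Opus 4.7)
The statement to prove has two parts: the equality of the two series expressions, and then showing that $\mu$ is $f$-invariant with total mass $\int R\, d\nu$. The plan is purely bookkeeping in nature — there is no substantive analytic difficulty, only careful re-indexing of tower-type sums.

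First I would verify that the two expressions for $\mu$ coincide by interchanging the order of summation (all terms are positive measures, so Tonelli applies freely). The double sum $\sum_{n\ge 1}\sum_{j=0}^{n-1} f^j_*(\nu|_{\{R=n\}})$ equals $\sum_{j\ge 0}\sum_{n>j} f^j_*(\nu|_{\{R=n\}})$, and the inner sum telescopes since $\{R>j\}=\bigsqcup_{n>j}\{R=n\}$. The total mass follows immediately from the same rearrangement: $\mu(\XX)=\sum_{j\ge 0}\nu(\{R>j\})=\sum_{n\ge 1} n\,\nu(\{R=n\})=\int R\, d\nu$, which is the classical Kac-type identity.

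The main content is the $f$-invariance. I would compute $f_*\mu$ directly using the second expression:
\[
f_*\mu=\sum_{j\ge 0}f^{j+1}_*(\nu|_{\{R>j\}})=\sum_{k\ge 1}f^k_*(\nu|_{\{R\ge k\}}).
\]
Comparing with $\mu=\nu|_{\{R>0\}}+\sum_{k\ge 1}f^k_*(\nu|_{\{R>k\}})$ and using $\nu|_{\{R\ge k\}}-\nu|_{\{R>k\}}=\nu|_{\{R=k\}}$, I would obtain
\[
f_*\mu-\mu=-\nu|_A+\sum_{k\ge 1}f^k_*(\nu|_{\{R=k\}}).
\]
On the set $\{R=k\}$ the induced map agrees with $f^k$, so $F_*\nu=\sum_{k\ge 1}f^k_*(\nu|_{\{R=k\}})$, giving $f_*\mu-\mu=F_*\nu-\nu=0$ by $F$-invariance of $\nu$. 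This closes the argument.

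The only subtle point to be a little careful about is convergence: since all the measures appearing are nonnegative and their total masses sum to $\int R\, d\nu$, which need not be finite in general, $\mu$ is a priori only a $\sigma$-finite $f$-invariant measure. The identities above are nevertheless justified term-by-term for every Borel set, so no summability issue actually obstructs the proof. Hence the only real "obstacle" is notational discipline in reindexing, which is why this is stated as a folklore result.
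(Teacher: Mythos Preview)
Your argument is correct and is precisely the standard tower computation one expects for this folklore lemma. The paper itself does not supply a proof: it merely cites Lemma~4.2 of \cite{Pi20}, so there is no in-paper argument to compare against. Your handling of the potential $\sigma$-finiteness issue is adequate; to make the invariance step completely clean you can avoid writing $f_*\mu-\mu$ altogether and instead verify $f_*\mu(B)=\mu(B)$ directly for each Borel $B$ by splitting $\{R\ge k\}=\{R>k\}\sqcup\{R=k\}$ inside the sum, which is what your ``term-by-term'' remark is pointing at.
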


\subsection{Proof of the Theorem~\ref{MainTheoA} and Corollary~\ref{MainCorAA}}

The proof of Theorem~\ref{MainTheoA} follows the following script.
Because of the geometric assumption on the singular values, the first return map $F=f^R$ to a lateral neighborhood of the singularity $c$ is a full induced Markov map.
Moreover, $c$ belongs to the boundary a connected component $P$ of the domain of $F$.
We use that $c\in\partial P$ to build a new induced map $F_c=F^{R_c}$ with good properties.
In particular, every $f$-invariant ergodic probability $\mu$ with $\supp\mu=[f(c_+),f(c_-)]$ can be $F_c$-lifted.  
Given an ergodic $f$ invariant probability $\mu$, with $\supp\mu=[f(c_+),f(c_-)]$, one can use mass distribution to obtain a $F_c$-invariant Bernoulli measure $\nu$ arbitrarily close to the $F_c$-lift of $\mu$ (in particular, its entropy).
Furthermore, we can show that $\int R_c d\nu$ and $\int R d\mu_1<+\infty$, but $\int R_c d\nu_2=+\infty$, where $\nu_1$ is the $F$-invariant probability having $\nu$ as its $F_c$-lift and, similarly, $\nu_2$ is the $f$-invariant probability having $\nu_1$ as its $F$-lift.
As we can compere $\log|x-c|$ and $\log f'$ with $R_c$, we can construct the measures $\cm_{_{+\infty}}$ of Theorem~\ref{MainTheoA}.

\begin{proof}[Proof of the Theorem~\ref{MainTheoA}] 
Let $p\in(c,c+r)$ be a periodic point such that $p=\min(\co_f^+(p)\cap(c,1)) <  \min(\co_f^+(c_+)\cap(c,1))$.
Let $J=(c,p)$ and note that $\co_f^+(\partial J)\cap J=\emptyset$.
Let $A=\{x\in J\,;\,\co_f^+(f(x))\cap J\ne\emptyset\}$, $R:A\to\NN$ be the first return time to $J$  and $F:A\to J$ the first return map, i.e., $F(x)=f^{R(x)}$.

Let $\cp$ be the set of connected components of $A$.

\begin{Claim}\label{Claim809098787}
$F(P)=(c,p)$ for every $P\in\cp$.	
\end{Claim}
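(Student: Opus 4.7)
The plan is to fix a connected component $P\in\cp$, let $n=R|_P$ (which is constant on $P$ since the first return time is locally constant wherever the return occurs in the interior of $J$), observe that $f^n|_P$ is a monotone diffeomorphism onto an open subinterval $I:=f^n(P)\subseteq J$, and then show $I=J$ by identifying the two one-sided limits of $f^n$ at the endpoints of $P$ as $c$ and $p$, respectively.

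Writing $P=(a_1,a_2)$, the first step is to classify why each $a_i$ is a boundary point of $P$ rather than an interior point. Three situations can arise: (a) $a_i\in\partial J=\{c,p\}$, so the domain $J$ itself ends there; (b) $a_i$ lies in the interior of $J$ and some intermediate iterate satisfies $f^j(a_i)=c$ for a minimal $1\le j<n$, so the orbit is broken by the singularity; (c) $a_i$ is in the interior of $J$ and $f^n(a_i)\in\partial J$, so $R$ jumps past $n$ at $a_i$. If none of these held, one-sided continuity of $R$ in a punctured neighborhood of $a_i$ would let one extend $P$ past $a_i$, contradicting maximality.

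In each case I would compute the lateral limit of $f^n$ at $a_i$ taken from within $P$. In case (a), this limit equals $f^n$ applied to the one-sided starting value ($c_+$ or $p_-$), and by the hypotheses ($f^t(c_+)=c$ together with $f^n(c_-)\notin(c,c+r)$ handle the $c_+$ orbit after time $t$, the minimality $p<\min(\co_f^+(c_+)\cap(c,1))$ handles it before time $t$, and $p=\min(\co_f^+(p)\cap(c,1))$ with periodicity handles $p$) one gets $\co_f^+(\partial J)\cap J=\emptyset$, which forces the limit to lie in $\overline{J}\setminus J=\{c,p\}$. In case (b), as $x\to a_i$ from within $P$, $f^j(x)\to c$ from a definite side, so $f^n(x)\to f^{n-j}(c_\pm)$, which again lies in $\overline{J}\setminus J$ by the same orbit analysis. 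In case (c) the limit is simply $f^n(a_i)\in\partial J$. Since $f^n|_P$ is a monotone diffeomorphism, the two one-sided limits at $a_1$ and $a_2$ must be distinct elements of $\{c,p\}$; hence $\overline{I}=[c,p]$ and therefore $F(P)=I=(c,p)=J$.

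The main technical obstacle is the classification step together with the careful bookkeeping that in cases (b) and (c) the lateral limit of $f^n$ actually lands in $\{c,p\}$; this relies crucially on the combined avoidance property $\co_f^+(\partial J)\cap J=\emptyset$, which in turn is a delicate consequence of the three separate hypotheses on the orbits of $c_+$, $c_-$, and $p$ stated before the claim.
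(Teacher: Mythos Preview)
Your proof is correct and follows essentially the same approach as the paper: both arguments hinge on the avoidance property $\co_f^+(\partial J)\cap J=\emptyset$ together with a case analysis on whether some intermediate iterate lands on the singularity $c$. The only organizational difference is that the paper phrases the argument as a proof by contradiction starting from a maximal interval $I\ni x$ on which $f^{R(x)}$ is a homeomorphism with image in $J$, whereas you work directly with the connected component $P$ and classify its endpoints; once one notes (as you do) that $R$ is constant on $P$, the two setups coincide.
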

\begin{proof}[Proof of the claim]
Indeed, given $x\in A$, let $I=(a,b)\ni x$ be the maximal open interval such that $f^{R(x)}|_{I}$ is a homeomorphism and $f^{R(x)}(I)\subset J$.
If $f^{R(x)}(I)\ne J$, then $f^{R(x)}(a_+)\in J$ or $f^{R(x)}(b_-)\in J$. Suppose for instance that $f^{R(x)}(a_+)\in J$.
If $f^j(a_+)=c$ for some $1\le j<R(x)$ then $f^{R(x)-j}(c_+)=f^{R(x)}(a_+)\in J$ which is a contradiction with the definition of $p$.
So, $f^j(a_+)\ne c$ for every $0\le j<R(x)$ and in this case, there is $\delta>0$ such that $f^{R(x)}|_{B_{\delta}(a)}$ is a homeomorphism and $f^{R(x)}(B_{\delta}(a))\subset J$, but this implies that $f^{R(x)}|_{(a-\delta,b)}$ is a homeomorphism with $f^{R(x)}((a-\delta,b))\subset J$, contradicting the definition of $I(x)$.
Now suppose that $f^{R(x)}(b_-)\in J$. If $f^j(b_-)=c$ for some $1\le j<R(x)$ then $f^{R(x)-j}(c_-)=f^{R(x)}(b_-)\in J$, contradicting that $\co_f^+(c_-)\cap (c,c+r)=\emptyset$. So, there is $\delta>0$ such that $f^{R(x)}|_{B_{\delta}(b)}$ is a homeomorphism and $f^{R(x)}(B_{\delta}(b))\subset J$, but this implies that $f^{R(x)}|_{(a,b+\delta)}$ is a homeomorphism with $f^{R(x)}((a,b+\delta))\subset J$, contradicting the definition of $I(x)$. Hence, we must have $f^{R(x)}(I(x))=(c,p)$, proving  the claim.
\end{proof}

\begin{Claim}\label{Claim0100020010} $\exists q>c$ so that $(c,q)\in\cp$ and $R((c,q))=t_0:=\min\{j\ge1\,;\,f^j(c_+)=c\}$.
\end{Claim}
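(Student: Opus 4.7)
The plan is to identify $q$ as the right endpoint of the component of $A$ containing a small right-neighborhood of $c$. Concretely, I will show that for sufficiently small $\epsilon>0$ one has $(c,c+\epsilon)\subset A$ with first return time $R\equiv t_0$; the connected component of $A$ containing $(c,c+\epsilon)$ is then an open subinterval of $J=(c,p)$ whose left endpoint has to equal $c$, so it has the form $(c,q)$ for some $q>c$, and since the first return time is locally constant on $A$, it satisfies $R((c,q))=t_0$.

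The verification of $(c,c+\epsilon)\subset\{R=t_0\}$ splits into three ingredients. First, by minimality of $t_0$ we have $f^j(c_+)\neq c$ for every $1\le j<t_0$, so for $\epsilon$ sufficiently small no orbit of $x\in(c,c+\epsilon)$ meets $c$ before time $t_0$, and $f^{t_0}$ is well defined and continuous on $(c,c+\epsilon)$ with $\lim_{x\downarrow c}f^{t_0}(x)=f^{t_0}(c_+)=c$. Second, the choice of $p$ yields $p<\min(\co_f^+(c_+)\cap(c,1))$, which forces each $f^j(c_+)$ with $1\le j<t_0$ to lie in the complement of $(c,p]$; by continuity the same property transfers to $f^j(x)$ for every $x\in(c,c+\epsilon)$, so none of the first $t_0-1$ iterates of such $x$ enters $J$. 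Third, from the non-flat form \eqref{Equationjytrdcvbjk} together with the monotonicity of $\phi_{0,f}$ (decreasing, sending $c$ to $0$) and $\phi_{1,f}$ (increasing, sending $c$ to $0$) one verifies that $f$ is strictly monotonically increasing on each of the branches $(0,c)$ and $(c,1)$. Because the first $t_0$ iterates of $(c,c+\epsilon)$ avoid the singularity, $f^{t_0}|_{(c,c+\epsilon)}$ is a composition of monotonically increasing branches, hence itself monotonically increasing. Combined with the right-limit $f^{t_0}(c_+)=c$, this forces $f^{t_0}(x)>c$ for every $x\in(c,c+\epsilon)$; shrinking $\epsilon$ further yields $f^{t_0}(x)<p$, so $f^{t_0}(x)\in(c,p)=J$ and $R(x)=t_0$.

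I expect the third step to be the main obstacle: without the monotonicity argument, $f^{t_0}(x)$ could a priori approach $c$ from below, and then $x$ would fail to land in $J$ at time $t_0$ and the component of $\cp$ adjacent to $c$ on the right would have a return time strictly larger than $t_0$. Once monotonicity is invoked, the connected component of $A$ containing $(c,c+\epsilon)$ is an open interval $(c,q)\subset J$ with $q\ge c+\epsilon>c$, and by local constancy of $R$ on $A$ we have $R\equiv t_0$ on $(c,q)$, which exhibits $(c,q)\in\cp$ with $R((c,q))=t_0$ as required.
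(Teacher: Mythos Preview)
Your proof is correct and follows essentially the same approach as the paper: both show that a small right-neighborhood $(c,c+\epsilon)$ of $c$ lies in $A$ with first return time $t_0$, and then take $(c,q)$ to be the connected component of $A$ containing it. The paper's argument is terser --- it simply asserts that $f^{t_0}((c,c+\delta))=(c,f^{t_0}(c+\delta))$ and that $f^j((c,c+\delta))\cap J=\emptyset$ for $1\le j<t_0$ --- whereas you spell out explicitly the monotonicity of $f$ on each branch (to ensure $f^{t_0}(x)$ lands to the right of $c$ rather than to the left) and the use of $p<\min(\co_f^+(c_+)\cap(c,1))$ to keep the intermediate iterates out of $J$; these are exactly the ingredients the paper is tacitly invoking.
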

\begin{proof}[Proof of the claim]
it follows from the definition of $t_0$ that exists $\delta>0$ such that $f^{t_0}|_{(c,c+\delta)}$ is a homeomorphism and $f^{t_0}((c,c+\delta))=(c,f^{t_0}(c+\delta))$.
Moreover, $f^{j}((c,c+\delta))\cap J=\emptyset$ for every $1\le j<t_0$. Hence, $(c,c+\delta)\subset A$.
Hence, the connected component of $A$ containing $(c,c+\delta)$ must be $(c,q)$ for some $c+\delta\le q\le p$ and $R((c,q))=t_0$.
\end{proof}

Let $P_0:=(c,q)\in\cp$  be given by Claim~\ref{Claim0100020010}. 
Let $\cp_0=\{J\}$ and, for $n\ge1$, set $\cp_n=\bigvee_{j=0}^{n-1}F^{-j}(\cp)$.
Let $\cp_n(c):=(F_{P_0})^{-n}(J)$, that is, $\cp_n(c)$ is the element of the partition $\cp_{n}$ of $J$ containing $(c,c+\delta_n)$ for some $\delta_n>0$.

Let $R_c:A\to\NN$ be an $F$-induced time given by
$$R_c(x)=\
\begin{cases}
1 & \text{ if }x\in A\setminus P_0\\
(n+1) & \text{ if }x\in \cp_n(c)\setminus\cp_{n+1}(c)\text{ for }n\in\NN.
\end{cases}$$

Let $\widetilde{A}= \bigcup_{n\ge 0}\left(\cp_n(c)\setminus\cp_{n+1}(c)\cap (\cap_{j=0}^n F^{-j}(A))\right) $ and define $F_c:\widetilde{ A}\to B$ by $$F_c(x)=F^{R_c(x)}(x)$$ and note that $F_c$ is orbit-coherent with respect to $F$.
Let $\cp_{c,0}=\cp_0\cap(J\setminus\cp_1(c)):=\{P\in\cp\,;\,P\subset J\setminus\cp_1(c)\}$ and $\cp_{c,n}=\cp_n\cap(\cp_n(c)\setminus\cp_{n+1}(c))$ for $n\in\NN$.
Let $A^*=\{x\in J\,;\,\co_F^+(x)\cap(q,p)\cap A\ne\emptyset\}$.
Taking $$\cp^*=\bigcup_{n\ge0}\cp_{c,n},$$ we have that $\cp^*$ is a partition of $A^*$ and that $F_c(P)=J$ for every $P\in\cp^*$.

Let $\mu$ be a $f$ invariant ergodic probability with $\supp\mu=[f(c_+),f(c_-)]$. Note that $\mu_0=\frac{1}{\mu((c,p))}\mu|_{(c,p)}$ is a $F$ invariant ergodic probability and
\begin{equation}\label{Equation010010101112}
  \lim_n\frac{1}{n}\#\{0\le j<n\,;\,F^j(x)\in \co_{F_c}^+(x)\}=\mu_0((q,p))>0
\end{equation}
for $\mu_0$ almost every $x$.
As $F_c$ is orbit-coherent $F$-induced map and \eqref{Equation010010101112} holds, it follows from Theorem~A that $\mu_0$ has a unique $F_c$-lift $\mu_c$ and this $\mu_c$ is $F_c$-ergodic.

Given $\alpha\in(0,1)$ and $\ell\in\NN$, consider the mass distribution

$$m_{\ell}(P)=
\begin{cases}
\hspace{2cm}\mu_c(P) & \text{ if }P\in\bigcup_{j=0}^\ell\cp_{c,j}\vspace{0.1cm}\\ \frac{\mu_c(\cp_{\ell+1}(c))}{\zeta(2+\alpha)}
\frac{1}{(n-\ell)^{2+\alpha}}\frac{\mu_c(P)}{\mu_c(\cp_{n}(c)\setminus\cp_{n+1}(c))} & \text{ if }P\in\cp_{c,n}\text{ for }n> \ell\\
\end{cases},$$
where $\zeta(s)=\sum_{n=1}^\infty n^{-s} $ is the Riemann zeta function. Note that $\sum_{P\in\cp^*}m_{\ell}(P)=1$.
Moreover, taking
$$H(x)=\begin{cases}
0 & \text{ if }x=0\\
x\log(1/x) & \text{ if }x>0	
\end{cases},$$
we have that 
$$\sum_{n>\ell}R_c(P_{c,n})m_{\ell}(P_{c,n})=\sum_{n>\ell}(n+1)\,m_{\ell}(P_{c,n})=$$
$$=\sum_{n>\ell}(n+1)\sum_{P\in\cp_{c,n}}\frac{\mu_c(\cp_{\ell+1}(c))}{\zeta(2+\alpha)}
\frac{1}{(n-\ell)^{2+\alpha}}\frac{\mu_c(P)}{\mu_c(\cp_{n}(c)\setminus\cp_{n+1}(c))} =$$
$$=\frac{\mu_c(\cp_{\ell+1}(c))}{\zeta(2+\alpha)}\sum_{n>\ell}\bigg(
\frac{n+1}{(n-\ell)^{2+\alpha}}\frac{1}{\mu_c(\cp_{n}(c)\setminus\cp_{n+1}(c))} \underbrace{\sum_{P\in\cp_{c,n}}\mu_c(P) }_{\mu_c(\cp_{n}(c)\setminus\cp_{n+1}(c))}\bigg)=$$
$$=\frac{\mu_c(\cp_{\ell+1}(c))}{\zeta(2+\alpha)}\sum_{n>\ell}\frac{n+1}{(n-\ell)^{2+\alpha}}=\frac{\mu_c(\cp_{\ell+1}(c))}{\zeta(2+\alpha)}\sum_{n=1}\frac{n+1+\ell}{n^{2+\alpha}}\le$$
$$\le\frac{\mu_c(\cp_{\ell+1}(c))}{\zeta(2+\alpha)}(\ell+2)\sum_{n=1}\frac{1}{n^{1+\alpha}}=(\ell+1)\mu_c(\cp_{\ell+1}(c))\bigg(\frac{\ell+2}{\ell+1}\frac{\zeta(1+\alpha)}{\zeta(2+\alpha)}\bigg)\le$$
$$\le2(\ell+1)\mu_c(\cp_{\ell+1}(c))\frac{\zeta(1+\alpha)}{\zeta(2+\alpha)}\le 2(\ell+1)\zeta(1+\alpha)<\infty$$
and
$$\sum_{n>\ell}H(m_{\ell}(P_{c,n}))=\sum_{n>\ell}m_{\ell}(P_{c,n})\log(1/m_{\ell}(P_{c,n}))=$$

$$=\sum_{n>\ell}\sum_{P\in\cp_{c,n}}H\bigg(\frac{\mu_c(\cp_{\ell+1}(c))}{\zeta(2+\alpha)}
\frac{1}{(n-\ell)^{2+\alpha}}\frac{\mu_c(P)}{\mu_c(\cp_{n}(c)\setminus\cp_{n+1}(c))}\bigg)\le$$
$$=\sum_{n>\ell}\sum_{P\in\cp_{c,n}}H\bigg(
\frac{1/\zeta(2+\alpha)}{(n-\ell)^{2+\alpha}}\bigg)\le$$

$$\le\sum_{j>\ell}\frac{\log(\zeta(2+\alpha)(n-\ell)^{2+\alpha})}{\zeta(2+\alpha)(n-\ell)^{2+\alpha}}=$$
$$=\frac{1}{\zeta(2+\alpha)}\sum_{n=1}^{+\infty}\frac{\log(\zeta(2+\alpha)n^{2+\alpha})}{n^{2+\alpha}}\le$$
$$=\frac{\log(\zeta(2+\alpha))}{\zeta(2+\alpha)}\sum_{n=1}^{+\infty}\frac{1}{n^{2+\alpha}}+\frac{2+\alpha}{\zeta(2+\alpha)}\sum_{n=1}^{+\infty}\frac{\log(n)}{n^{2+\alpha}}=$$
$$=\log(\zeta(2+\alpha))+\frac{2+\alpha}{\zeta(2+\alpha)}\sum_{n=1}^{+\infty}\frac{n}{n^{2+\alpha}}\le$$
$$\le\log(\zeta(2+\alpha))+\frac{(2+\alpha)\zeta(1+\alpha)}{\zeta(2+\alpha)}\le3\zeta(1+\alpha)\le2(\ell+1)\zeta(1+\alpha)<\infty.$$
Taking $C=2(\ell+1)\zeta(1+\alpha)$, we have that $$\sum_{n>\ell}\sum_{P\in\cp_{c,n}}R_c(P)\, m_{\ell}(P)\le C\;\;\text{ and }\;\;\sum_{n>\ell}\sum_{P\in\cp_{c,n}}H(m_{\ell}(P))\le C.$$
Hence,  $$\sum_{P\in\cp^*}R_c(P) m_{\ell}(P)=\int_{\cp_{\ell}(c)} R_c d\mu_c+\bigg(\sum_{n>\ell}\sum_{P\in\cp_{c,n}}(n+1)m_{\ell}(P)\bigg)\le\int R_c d\mu_c+C<+\infty$$
and
$$\sum_{P\in\cp^*}H(m_{\ell}(P))=\bigg(\sum_{n=0}^{\ell}\sum_{P\in\cp_{c,n}}H(\mu_c(P))\bigg)+\bigg(\sum_{n>\ell}\sum_{P\in\cp_{c,n}}H(m_{\ell}(P_{c,n}))\bigg)\le$$
$$\le\bigg(\sum_{P\in\cp^*}H(\mu_c(P))\bigg)+\bigg(\sum_{n>\ell}\sum_{P\in\cp_{c,n}}H(m_{\ell}(P_{c,n}))\bigg)\le$$
$$\le h_{\mu_c}(F_c)+C<+\infty.$$

Thus, taking $\nu_{\alpha,\ell}$ as the ergodic $F_c$-invariant probability generated by the mass distribution $m_{\ell}$,
we get that, $$h_{\nu_{\alpha,\ell}}(F_c)=\sum_{P\in\cp^*}H(m_{\ell}(P))<+\infty$$
and
$$\int R_c d\nu_{\alpha,\ell}=\sum_{P\in\cp^*}R_c(P)\,m_{\ell}(P)\le\int R_c d\mu_c+C<+\infty.$$
It follows from Lemma~\ref{FolkloreResultB} that $$\eta_{\alpha,\ell}=\frac{1}{\int R_c d\nu_{\alpha,\ell}}\sum_{n\ge1}\sum_{j=0}^{n-1}F^j_*(\nu_{\alpha,\ell}|_{\{R_c=n\}})$$ is an ergodic $F$ invariant probability.
Note that $\supp\eta_{\alpha,\ell}=[c,p]$. Moreover, as $$F^j(\{R_c=n\})\subset(c,q)$$ for every $0\le j<n$ and $n\ge2$, we get that
\begin{equation}\label{Equation8765435678g}
\eta_{\alpha,\ell}|_{(q,p)}=\frac{1}{\int R_c d\nu_{\alpha,\ell}}\nu_{\alpha,\ell}|_{(q,p)} 
\end{equation}
For similar reason, as $\mu_c$ is the $F_c$-lift of $F$-invariant probability $\mu_0$, we have
\begin{equation}\label{Equation876356jhgfd}
\mu_0|_{(q,p)}=\frac{1}{\int R_c d\mu_c}\mu_c|_{(q,p)}.
\end{equation}

As $\int R_c d\nu_{\alpha,\ell}$ and $h_{\nu_{\alpha,\ell}}(F_c)<+\infty$, it follows from the generalized Abramov formula that $$h_{\eta_{\alpha,\ell}}(F)=\frac{h_{\nu_{\alpha,\ell}}(F_c)}{\int R_c d\nu_{\alpha,\ell}}.$$


On the other hand, using (\ref{Equation8765435678g}) and (\ref{Equation876356jhgfd}), we have that 
$$\int R d\eta_{\alpha,\ell}=\int_{(c,q)} R d\eta_{\alpha,\ell}+\int_{(q,p)}R d\eta_{\alpha,\ell}=t_0\eta_{\alpha,\ell}((c,q))+\int_{(q,p)}R d\bigg(\frac{1}{\int R_c d\nu_{\alpha,\ell}}\nu_{\alpha,\ell}|_{(q,p)} \bigg)=$$
$$=t_0\eta_{\alpha,\ell}((c,q))+\frac{1}{\int R_c d\nu_{\alpha,\ell}}\int_{J\setminus\cp_0(c)}R \,d\nu_{\alpha,\ell}=$$
$$=t_0\eta_{\alpha,\ell}((c,q))+\frac{1}{\int R_c d\nu_{\alpha,\ell}}\sum_{P\in\cp_{c,0}}R(P) m_{\ell}(P)=$$
$$=t_0\eta_{\alpha,\ell}((c,q))+\frac{1}{\int R_c d\nu_{\alpha,\ell}}\sum_{P\in\cp_{c,0}}R(P)\mu_c(P)=$$
$$=t_0\eta_{\alpha,\ell}((c,q))+\frac{1}{\int R_c d\nu_{\alpha,\ell}}\int_{(q,p)}R d\mu_c=$$
$$=t_0\eta_{\alpha,\ell}((c,q))+\frac{1}{\int R_c d\nu_{\alpha,\ell}}\int_{(q,p)}R\,d\bigg(\frac{1}{\int R_c d\mu_c}\mu_0|_{(q,p)}\bigg)=$$
$$=t_0\eta_{\alpha,\ell}((c,q))+\frac{1}{\int R_c d\nu_{\alpha,\ell}}\frac{1}{\int R_c d\mu_c}\int_{(q,p)}R\,d\mu_0<+\infty$$
Therefore,
$$\mu_{\alpha,\ell}=\frac{1}{\int R d\eta_{\alpha,\ell}}\sum_{n\ge1}\sum_{j=0}^{n-1}f^j_*(\eta_{\alpha,\ell}|_{\{R=n\}})$$
is an ergodic $f$ invariant probability and, as $f$ is transitive on $[f(c_+),f(c_-)]$ and $\supp\mu_{\alpha,\ell}\supset(c,p)$, we get that $$\supp\mu_{\alpha,\ell}=[f(c_+),f(c_-)].$$

\begin{Claim}\label{Claimljgfie976rfre18}
$\lim_{\ell}h_{\eta_{\alpha,\ell}}(F)\ge h_{\mu_0}(F)$
\end{Claim}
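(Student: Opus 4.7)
The plan is to apply the generalized Abramov formula to both $\eta_{\alpha,\ell}$ and $\mu_0$, reducing the claim to a comparison of the ratios $h_{\nu_{\alpha,\ell}}(F_c)/\int R_c\,d\nu_{\alpha,\ell}$ and $h_{\mu_c}(F_c)/\int R_c\,d\mu_c$. The essential asymmetry that produces the direction of the inequality is that $\nu_{\alpha,\ell}$ is a Bernoulli measure on the generating partition $\cp^*$, whereas $\mu_c$ need not be: this forces the exact identity $h_{\nu_{\alpha,\ell}}(F_c)=\sum_{P\in\cp^*}H(m_\ell(P))$, while for $\mu_c$ one has only the inequality $h_{\mu_c}(F_c)\le\sum_{P\in\cp^*}H(\mu_c(P))$ coming from the Kolmogorov-Sinai bound for a one-sided generator.

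First I would verify that $\int R_c\,d\mu_c<+\infty$, in order that the Abramov formula be applicable on the $\mu_0$-side. By Lemma~\ref{FolkloreResultB} applied to $F_c$ as an $F$-induced map with time $R_c$, the measure $\sum_{j\ge 0}F^j_*(\mu_c|_{\{R_c>j\}})$ is $F$-invariant with total mass $\int R_c\,d\mu_c$; since $\mu_c$ is the $F_c$-lift of $\mu_0$, this $F$-invariant measure must be a positive multiple of the probability $\mu_0$, and finiteness follows. Abramov then gives $h_{\mu_0}(F)=h_{\mu_c}(F_c)/\int R_c\,d\mu_c$.

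Next I would analyze the limits of numerator and denominator separately. For the numerator, using $m_\ell=\mu_c$ on $\bigcup_{n\le\ell}\cp_{c,n}$ and discarding the nonnegative tail, one has
$$h_{\nu_{\alpha,\ell}}(F_c)\ge\sum_{n=0}^{\ell}\sum_{P\in\cp_{c,n}}H(\mu_c(P)),$$
so monotone convergence yields $\liminf_\ell h_{\nu_{\alpha,\ell}}(F_c)\ge H_{\mu_c}(\cp^*)\ge h_{\mu_c}(F_c)$. For the denominator, the corresponding splitting
$$\int R_c\,d\nu_{\alpha,\ell}=\sum_{n=0}^{\ell}\sum_{P\in\cp_{c,n}}R_c(P)\mu_c(P)+\tau_\ell$$
has a level-$\le\ell$ part converging monotonically to $\int R_c\,d\mu_c$, while the tail is controlled, by the computation already carried out in the body of the text, via $\tau_\ell\le 2(\ell+1)\mu_c(\cp_{\ell+1}(c))\zeta(1+\alpha)/\zeta(2+\alpha)$.

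The main obstacle is therefore to show that $\tau_\ell\to 0$. For this I would observe that $\mu_c(\cp_n(c))$ is monotone non-increasing in $n$, and that an Abel summation on $\int R_c\,d\mu_c=\mu_c(J\setminus\cp_1(c))+\sum_{n\ge 1}(n+1)(\mu_c(\cp_n(c))-\mu_c(\cp_{n+1}(c)))$ yields the domination $\sum_{n\ge 1}\mu_c(\cp_n(c))\le\int R_c\,d\mu_c<+\infty$. The classical fact that any monotone non-increasing summable sequence $(a_n)$ satisfies $na_n\to 0$ then delivers $(\ell+1)\mu_c(\cp_{\ell+1}(c))\to 0$, hence $\tau_\ell\to 0$ and $\int R_c\,d\nu_{\alpha,\ell}\to\int R_c\,d\mu_c$. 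Assembling the pieces,
$$\lim_\ell h_{\eta_{\alpha,\ell}}(F)=\lim_\ell\frac{h_{\nu_{\alpha,\ell}}(F_c)}{\int R_c\,d\nu_{\alpha,\ell}}\ge\frac{H_{\mu_c}(\cp^*)}{\int R_c\,d\mu_c}\ge\frac{h_{\mu_c}(F_c)}{\int R_c\,d\mu_c}=h_{\mu_0}(F),$$
which closes the plan.
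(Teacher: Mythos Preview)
Your proposal is correct and follows essentially the same route as the paper's proof: both apply Abramov to write $h_{\eta_{\alpha,\ell}}(F)=h_{\nu_{\alpha,\ell}}(F_c)/\int R_c\,d\nu_{\alpha,\ell}$, use the Bernoulli identity $h_{\nu_{\alpha,\ell}}(F_c)=\sum_P H(m_\ell(P))$, pass to the limit in numerator and denominator separately, and conclude via the Kolmogorov--Sinai bound $H_{\mu_c}(\cp^*)\ge h_{\mu_c}(F_c)$ together with Abramov on the $\mu_0$-side. Your version is in fact more carefully justified than the paper's terse chain of equalities: the paper simply asserts that $\lim_\ell\sum_P R_c(P)m_\ell(P)=\int R_c\,d\mu_c$, whereas you supply the missing verification that the tail $\tau_\ell\to 0$ via the Abel summation and the $na_n\to 0$ argument, and you also make explicit why $\int R_c\,d\mu_c<+\infty$.
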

\begin{proof}[Proof of the claim]
Indeed, $$\lim_{\ell}h_{\eta_{\alpha,\ell}}(F)=\lim_{\ell}\frac{h_{\nu_{\alpha,\ell}}(F_c)}{\int R_c d\nu_{\alpha,\ell}}=\lim_{\ell\to+\infty}\frac{\sum_{P\in\cp^*}H(m_{\ell}(P))}{\sum_{P\in\cp^*}R_c(P)\,m_{\ell}(P)}=$$
$$=\frac{\lim_{\ell}\sum_{P\in\cp^*}H(m_{\ell}(P))}{\lim_{\ell}\sum_{P\in\cp^*}R_c(P)\,m_{\ell}(P)}=\frac{\sum_{P\in\cp^*}H(\mu_c(P))}{\int R_c d\mu_c}\ge\frac{h_{\mu_c}(F_c)}{\int R_c d\mu_c}=h_{\mu_0}(F).$$
\end{proof}

\begin{Claim}\label{Claimljgfiyre58}
	$\lim_{\ell}\int R\,d\eta_{\alpha,\ell}=\int R\,d\mu_0$
\end{Claim}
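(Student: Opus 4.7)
The plan is to split $\int R\,d\eta_{\alpha,\ell}$ and $\int R\,d\mu_0$ through matching Abramov-type decompositions over $J=(c,q)\cup(q,p)$, so that the only $\ell$-dependence comes through the scalar $\int R_c\,d\nu_{\alpha,\ell}$. First I would note that by construction $m_\ell$ agrees with $\mu_c$ on $\cp_{c,0}$ for every $\ell\ge 0$, so $\nu_{\alpha,\ell}((q,p))=\mu_c((q,p))$ and $\int_{(q,p)}R\,d\nu_{\alpha,\ell}=\int_{(q,p)}R\,d\mu_c$. Combining this with \eqref{Equation8765435678g} and the fact that $R\equiv t_0$ on $(c,q)$, I would obtain
$$\int R\,d\eta_{\alpha,\ell}=t_0\,\eta_{\alpha,\ell}((c,q))+\frac{1}{\int R_c\,d\nu_{\alpha,\ell}}\int_{(q,p)}R\,d\mu_c,$$
and, using \eqref{Equation876356jhgfd} in parallel,
$$\int R\,d\mu_0=t_0\,\mu_0((c,q))+\frac{1}{\int R_c\,d\mu_c}\int_{(q,p)}R\,d\mu_c.$$
Since $\eta_{\alpha,\ell}$ and $\mu_0$ are probabilities on $J$, \eqref{Equation8765435678g} and \eqref{Equation876356jhgfd} together with $\nu_{\alpha,\ell}((q,p))=\mu_c((q,p))$ give $\eta_{\alpha,\ell}((c,q))=1-\mu_c((q,p))/\int R_c\,d\nu_{\alpha,\ell}$ and $\mu_0((c,q))=1-\mu_c((q,p))/\int R_c\,d\mu_c$. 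Therefore the whole claim reduces to the single scalar convergence $\int R_c\,d\nu_{\alpha,\ell}\to\int R_c\,d\mu_c$.

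The main obstacle is precisely this limit. I would split
$$\int R_c\,d\nu_{\alpha,\ell}=\sum_{n=0}^{\ell}(n+1)\,\mu_c\big(\cp_n(c)\setminus\cp_{n+1}(c)\big)+\sum_{n>\ell}(n+1)\sum_{P\in\cp_{c,n}}m_\ell(P).$$
Because $\mu_c$ is the $F_c$-lift of the probability $\mu_0$, Lemma~\ref{FolkloreResultB} forces $\int R_c\,d\mu_c<+\infty$, so the first sum increases monotonically to $\int R_c\,d\mu_c$. For the second, the computation done in the preamble of this claim already gives the bound $2(\ell+1)\mu_c(\cp_{\ell+1}(c))\,\zeta(1+\alpha)/\zeta(2+\alpha)$, so it suffices to show $\ell\,\mu_c(\cp_{\ell+1}(c))\to 0$. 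This follows from the finiteness of $\int R_c\,d\mu_c$ through the telescoping estimate
$$\ell\,\mu_c(\cp_{\ell+1}(c))=\ell\sum_{n>\ell}\mu_c\big(\cp_n(c)\setminus\cp_{n+1}(c)\big)\le\sum_{n>\ell}n\,\mu_c\big(\cp_n(c)\setminus\cp_{n+1}(c)\big),$$
which is the tail of a convergent series and hence vanishes.

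Once the scalar limit is established, I would pass to the limit in the two displays above: the identity for $\eta_{\alpha,\ell}((c,q))$ yields $\eta_{\alpha,\ell}((c,q))\to\mu_0((c,q))$, and $1/\int R_c\,d\nu_{\alpha,\ell}\to 1/\int R_c\,d\mu_c$, so $\int R\,d\eta_{\alpha,\ell}\to\int R\,d\mu_0$. The only genuinely delicate input is the tail decay $\ell\,\mu_c(\cp_{\ell+1}(c))\to 0$; the rest is bookkeeping built on the Abramov-type decompositions \eqref{Equation8765435678g}, \eqref{Equation876356jhgfd} and the exact agreement of $m_\ell$ and $\mu_c$ on $\cp_{c,0}$.
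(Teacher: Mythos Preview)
Your proof is correct and takes a somewhat different route from the paper. The paper introduces the $F_c$-lift $\widetilde{R}(x)=\sum_{j=0}^{R_c(x)-1}R\circ F^j(x)$ and writes $\int R\,d\eta_{\alpha,\ell}=\int\widetilde{R}\,d\nu_{\alpha,\ell}\big/\int R_c\,d\nu_{\alpha,\ell}$ (and similarly for $\mu_0,\mu_c$), then asserts that both numerator and denominator converge, using that $\widetilde{R}$ is constant on each $P\in\cp_{c,n}$. You instead recycle the decomposition over $J=(c,q)\cup(q,p)$ that the paper already used to show $\int R\,d\eta_{\alpha,\ell}<\infty$, observe that the $(q,p)$-contribution is $\ell$-independent because $m_\ell\equiv\mu_c$ on $\cp_{c,0}$, and reduce everything to the single scalar limit $\int R_c\,d\nu_{\alpha,\ell}\to\int R_c\,d\mu_c$. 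A small bonus of your route is that you actually \emph{prove} this scalar limit (via the tail estimate $(\ell+1)\mu_c(\cp_{\ell+1}(c))\to 0$ coming from $\int R_c\,d\mu_c<\infty$), whereas the paper simply asserts it; conversely, the paper's $\widetilde{R}$ formulation is a bit more conceptual and would generalize more readily if $R$ were not constant on $(c,q)$.
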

\begin{proof}[Proof of the claim]
Let $\widetilde{R}(x)$ be the $F_c$-lift of $R$, that is, $\widetilde{R}(x)=\sum_{j=0}^{R_c(x)-1}R\circ F^j(x)$.
We know that $\int R d\eta_{\alpha,\ell}=\frac{\int \widetilde{R} d\nu_{\alpha,\ell}}{\int R_c d\nu_{\alpha,\ell}}$ as well as 
$\int R d\mu_0=\frac{\int \widetilde{R} d\mu_c}{\int R_c d\mu_c}$.
As $\lim_{\ell}\int R_c d\nu_{\alpha,\ell}=\int R_c d\mu_c<+\infty$, we need to show that $\lim_{\ell}\int \widetilde{R} d\nu_{\alpha,\ell}=\int \widetilde{R} d\mu_c$.
To prove so, observe that $\widetilde{R}(x)$ is constant on each $P\in\cp_{c,n}$ and every $n\ge0$.
Indeed, $\widetilde{R}(x)=t_0 n+R(f^{t_0 n}(x))$ and $f^{t_0 n}(P)\in\cp_{c,0}=\cp\cap(q,p)$.
As $R$ is constant  on the elements of $\cp$, we get that $\widetilde{R}$ is constant on $P\in\cp_{c,n}$.
Therefore, 
$$\lim_{\ell}\int \widetilde{R} d\nu_{\alpha,\ell}=\lim_{\ell}\sum_{n=0}^{+\infty}\sum_{P\in\cp_{c,n}}\widetilde{R}(P)\nu_{\alpha,\ell}(P)=\sum_{n=0}^{+\infty}\sum_{P\in\cp_{c,n}}\widetilde{R}(P)\mu_c(P)=\int \widetilde{R} d\mu_c,$$
concluding the proof of the claim.
\end{proof}

Using Claim~\ref{Claimljgfie976rfre18} and Claim~\ref{Claimljgfiyre58} we can conclude that $\sup\{h_{\mu_{\alpha,\ell}}(f)\,;\,\ell\in\NN\}\ge h_{\mu}(f)$. Indeed, 
$$\lim_{\ell}h_{\mu_{\alpha,\ell}}(f)=\lim_{\ell}\frac{h_{\eta_{\alpha,\ell}}(F)}{\int R d\eta_{\alpha,\ell}}=\frac{\lim_{\ell}h_{\eta_{\alpha,\ell}}(F)}{\lim_{\ell}\int R d\eta_{\alpha,\ell}}=\frac{\lim_{\ell}h_{\eta_{\alpha,\ell}}(F)}{\int R d\mu_0}\ge \frac{h_{\mu_0}(F)}{\int R d\mu_0}=h_{\mu}(f).$$

As we can choose any $f$ invariant  probability $\mu$, with $\supp\mu=[f(c_+),f(c_-)]$, to cons\-truct $\mu_{\alpha,\ell}$ and as $\sup\{h_{\mu}(f)\,;\,\mu\in\cm^1(f)\text{ and }\supp\mu=[f(c_+),f(c_-)]\}=h_{top}(f)$,
we get that $$\sup\{h_{\mu}(f)\,;\,\mu\in\cm_{_{+\infty}}\}=h_{top}(f),$$
where $$\cm_{_{+\infty}}:=\{\mu_{\alpha,\ell}\,;\,\mu\in\cm^1(f)\text{ with }\supp\mu=[f(c_+),f(c_-)],\alpha\in(0,1)\text{ and }\ell\in\NN\}$$

Moreover, note that, if $0<\alpha_0<\alpha_1<1$, then $\nu_{\alpha_0,\ell}(P_{c,n})\ne\nu_{\alpha_1,\ell}(P_{c,n})$ for any $n>\ell$.
In particular, $\nu_{\alpha_0,\ell}\ne\nu_{\alpha_1,\ell}$.
As $R_c$ is exact, $F_c$ is orbit-coherent (see Lemma~2.6 of \cite{Pi20}).
Thus, it follows from Theorem~B of \cite{Pi20} that an ergodic $F$-invariant probability $\mu$ has at most one $F_c$-lift.
That is, setting $\cu=\{\nu\in\cm^1(F_c)$ $;$ $\nu$ is $F_c$ ergodic and $\int R_c d\nu<+\infty\}$, we have that $$\cu\ni\nu\mapsto\frac{1}{\int R d\nu}\sum_{n\ge1}\sum_{j=0}^{n-1}F^j_*(\nu|_{\{R_c=n\}})\in\cm^1(F)$$ is injective. 
Therefore, $\eta_{\alpha_{_0},\ell}\ne\eta_{\alpha_{_1},\ell}$ and, by the same argument, $\mu_{\alpha_{_0},\ell}\ne\mu_{\alpha_{_1},\ell}$.
This implies that $\cm_{_{+\infty}}=\{\mu_{\alpha,\ell}\,;\,\alpha\in(0,1)\text{ and }\ell\ge1\}$ is uncountable, and so the proof of items $(${\em i}$)$, $(${\em ii}$)$ and $(${\em iii}$)$ were obtained.

\textbf{ Item $(${\em iv}$)$}. To show that $\mu_{\alpha,\ell}$ has  fast recurrence to the singular region, we initially observed that $\int R_c d\eta_{\alpha,\ell}=+\infty$. 
Indeed, it is easy to see that $R_c$ is a exact induced time.
Hence, as $A$ is the domain of $R_c$ and $\eta_{\alpha,\ell}(A)=1$, it follows from Lemma~\ref{LemmaExacRtoRR} that $$\int R_c d\eta_{\alpha,\ell}\ge\frac{1}{2}\bigg(\int (R_c)^2d\nu_{\alpha,\ell}\bigg)\bigg/\bigg(\int R_c d\nu_{\alpha,\ell}\bigg)>$$
$$>\frac{1}{2(\int R_c d\mu_c+C)}\sum_{n>\ell}\sum_{P\in\cp_{c,n}}(n+1)^2\,m_{\ell}(P)=$$
$$=\frac{1}{2(\int R_c d\mu_c+C)}\sum_{n>\ell}\sum_{P\in\cp_{c,n}}\frac{\mu_c(\cp_{\ell+1}(c))}{\zeta(2+\alpha)}
\frac{(n+1)^2}{(n-\ell)^{2+\alpha}}\frac{\mu_c(P)}{\mu_c(\cp_{n}(c)\setminus\cp_{n+1}(c))}=$$
$$=\frac{1}{2(\int R_c d\mu_c+C)}\frac{\mu_c(\cp_{\ell+1}(c))}{\zeta(2+\alpha)}\sum_{n>\ell}\frac{(n+1)^2}{(n-\ell)^{2+\alpha}}\underbrace{\sum_{P\in\cp_{c,n}}
\frac{\mu_c(P)}{\mu_c(\cp_{n}(c)\setminus\cp_{n+1}(c))}}_1=$$
$$=\frac{1}{2(\int R_c d\mu_c+C)}\frac{\mu_c(\cp_{\ell+1}(c))}{\zeta(2+\alpha)}\sum_{n>\ell}\frac{(n+1)^2}{(n-\ell)^{2+\alpha}}=$$ 
$$=\frac{\mu_c(\cp_{\ell+1}(c))}{2(\int R_c d\mu_c+C)\zeta(2+\alpha)}\sum_{n=1}\frac{(n+1+\ell)^2}{n^{2+\alpha}}\ge\frac{\mu_c(\cp_{\ell+1}(c))}{2(\int R_c d\mu_c+C)\zeta(2+\alpha)}\sum_{n=1}\frac{1}{n^{\alpha}}=+\infty.$$

\begin{Claim}
There are $K>0$ and $x_0\in(c,q)$ such that $|\log|x-c||\ge K R_c(x)$ for every $x\in(c,x_0)$.
\end{Claim}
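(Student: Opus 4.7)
The plan is to exploit that $F|_{P_0}$ is uniformly expanding, so points in $\cp_n(c)$ lie within distance $O(\lambda^{-n t_0})$ of $c$, which will yield a linear lower bound for $|\log|x-c||$ in terms of $R_c(x)=n+1$.

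By Claim~\ref{Claim0100020010}, $F|_{P_0}=f^{t_0}|_{P_0}$, and since $f'\ge\lambda>1$ on $[0,1]\setminus\{c\}$, the chain rule yields $|(F|_{P_0})'|\ge\lambda^{t_0}$ on $P_0$, and consequently $|(F^n)'(y)|\ge\lambda^{n t_0}$ for every $y\in\cp_n(c)=(F|_{P_0})^{-n}(J)$. Moreover, $F|_{P_0}$ extends to a homeomorphism $\overline{P_0}\to\overline{J}$ with $F(c_+)=f^{t_0}(c_+)=c$, so by iteration $F^n$ extends continuously to the endpoint $c$ with $F^n(c_+)=c$.

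Next, I would apply the Mean Value Theorem to $F^n$ on the interval with endpoints $c$ and $x\in\cp_n(c)$:
$$p-c\;\ge\;|F^n(x)-c|\;=\;|F^n(x)-F^n(c_+)|\;\ge\;\lambda^{n t_0}\,|x-c|,$$
so $|x-c|\le(p-c)\lambda^{-n t_0}$ and hence $|\log|x-c||\ge n\,t_0\log\lambda-\log(p-c)$. Since $R_c(x)=n+1$ on $\cp_n(c)\setminus\cp_{n+1}(c)$, choosing $n_0\in\NN$ large enough that $n\,t_0\log\lambda-\log(p-c)\ge\tfrac{t_0\log\lambda}{2}(n+1)$ for every $n\ge n_0$, and setting $x_0=c+(p-c)\lambda^{-n_0 t_0}$ so that $(c,x_0)\subset\cp_{n_0}(c)$, the claim follows with $K=\tfrac{t_0\log\lambda}{2}$.

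No serious obstacle is anticipated; the only mildly delicate point is the application of MVT across the singular endpoint $c$, which is legitimate because $F|_{P_0}=f^{t_0}|_{P_0}$ extends continuously through $c_+$ mapping to $c$. Note also that even though $f'$ blows up near $c$, we only need the uniform lower bound $f'\ge\lambda$ to carry out the argument.
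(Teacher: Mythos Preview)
Your approach is correct and genuinely more elementary than the paper's, but there is a small slip at the very end: the inclusion $(c,x_0)\subset\cp_{n_0}(c)$ goes the wrong way. Your MVT estimate shows that every $x\in\cp_n(c)$ satisfies $|x-c|\le(p-c)\lambda^{-n t_0}$, hence $\cp_{n_0}(c)\subset(c,x_0]$ with your choice $x_0=c+(p-c)\lambda^{-n_0 t_0}$, not the reverse. The fix is immediate: simply take $x_0=\sup\cp_{n_0}(c)$. Since the $\cp_n(c)$ are nested intervals of the form $(c,a_n)$ with $a_n\downarrow c$, one then has $(c,x_0)=\cp_{n_0}(c)$, so any $x\in(c,x_0)$ lies in $\cp_n(c)\setminus\cp_{n+1}(c)$ for some $n\ge n_0$, whence $R_c(x)=n+1$ and your bound $|\log|x-c||\ge n\,t_0\log\lambda-\log(p-c)\ge K(n+1)$ applies.

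The paper argues differently, exploiting the non-flatness of $f$ at $c$. Since $f(x)-f(c_+)\asymp(x-c)^{1/\theta}$ with $\theta>1$ near $c_+$, and the remaining $t_0-1$ iterates have bounded derivative, one gets $f^{t_0}(x)-c\approx(x-c)^{1/\theta'}$ for some $\theta'>1$. Iterating the local inverse $g=(f^{t_0}|_{P_0})^{-1}$ then gives $|x-c|\lesssim(\gamma-c)^{(\theta')^{n}}$ for $x\in\cp_n(c)$, i.e.\ $|\log|x-c||\gtrsim(\theta')^n$, an \emph{exponential} lower bound in $n$ which is then crudely weakened to the linear bound $\ge Kn$. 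Your argument uses only the uniform expansion $f'\ge\lambda$ and ignores the singularity entirely; this gives the weaker (but sufficient) exponential shrinking of $\cp_n(c)$. The paper's route reflects the true geometry (the $\cp_n(c)$ actually shrink doubly exponentially), but that extra precision is unnecessary for the claim as stated.
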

\begin{proof}[Proof of the claim]
As $f$ is non-flat, there is $1<\theta_1\le\theta_2$ and $a\ge1$ such that $$(1/a)(x-c)^{1/\theta_1}\le f(x)-f(c_+) \le a(x-c)^{1/\theta_2}$$
for every $x>c$. As $f^{t_0}(c_+)=c$ and $0<f'(f^j(c_+))<+\infty$ for every $1\le j<t_0$, there exist $b\ge 1$ and $\delta_0>0$ such that 
$$(1/b)(x-c)^{1/\theta_1}\le f^{t_0}(x)-c \le b(x-c)^{1/\theta_2}.$$
Taking $1<\theta_1'<\theta_1\le\theta_2<\theta_2'$ and $0<\delta_0'<\delta_0$ small, we get that 
$$(x-c)^{1/\theta_1'}\le f^{t_0}(x)-c \le (x-c)^{1/\theta_2'}.$$
for every $c<x<c+\delta_0'$.
Hence, taking $g=(f^{t_0}|_{(c,c+\delta_0')})^{-1}$, we have 
$$(x-c)^{\theta_2'}\le g(x)-c \le (x-c)^{\theta_1'}$$ and so
$$(x-c)^{(\theta_2')^n}\le g^n(x)-c \le (x-c)^{(\theta_1')^n}.$$

Let $n_0$ be the smaller $n\ge1$ such that $P_{c,n}\subset(c,f(c+\delta_0'))$.
Taking $\gamma=\sup P_{c,n_0}$, we get that $P_{c,n}=[g^{n-n_0+1}(\gamma),g^{n-n_0}(\gamma))$ for every $n\ge n_0$.
Thus, $(\gamma-c)^{\theta_2'}\le|x-c|\le(\gamma-c)$ for $x\in P_{c,n_0}$ and 
$$(\gamma-c)^{(\theta_2')^{(n-n_0+1)}}\le|x-c|\le(\gamma-c)^{(\theta_1')^{(n-n_0)}},$$
For every $x\in P_{c,n_0}$ and $n>n_0$.
This means that 
$$\log(1/(\gamma-c))(\theta_1')^{(n-n_0)}\le|\log|x-c||\le\log(1/(\gamma-c))(\theta_2')^{(n-n_0+1)},$$
$x\in P_{c,n}$ and $n\ge n_0$.
As $R_c(P_{c,n})=n$, if we take $n_1\ge n_0$ such that $(\theta_1')^n\ge n$ for every $n\ge n_1$,   $K=\frac{1}{2(\theta_1')^{n_1}}\log(\frac{1}{\gamma-c})$ and $x_0=\sup P_{c,n_0+1}=\inf P_{c,n_0}$ then 
$$|\log|x-c||\ge K\,{\theta_1'}^{R_c(x)}\ge K R_c(x)$$
for every $c<x<x_0$. 
\end{proof}

As $$\int_J R_c d\mu_{\alpha,\ell}\ge\frac{1}{\int R d\eta_{\alpha,\ell}} \int R_c d\eta_{\alpha,\ell}=+\infty$$ and $c$ is the unique pole of $R_c$, we conclude that $\int_{x\in(c,x_0)}R_c(x)d\mu_{\alpha,\ell}=\infty$ and so, 
\begin{eqnarray*}
\int_{x\in[0,1]}|\log|x-c||d\mu_{\alpha,\ell}&\ge & \int_{x\in(c,x_0)}|\log|x-c||d\mu_{\alpha,\ell}\ge \\
&\ge & K\int_{x\in(c,x_0)}R_c(x)d\mu_{\alpha,\ell}=+\infty,
\end{eqnarray*} proving item $(${\em iv}$)$.

Finally for item $(${\em v}$)$,  it follows from $f$ be non-flat that there are constants $c_0,c_1,c_2>0$ such that $-c_0+c_1|\log|x-c||\le \log|f'(x)|\le c_0+c_2|\log|x-c||$ for every $x\in[0,1]\setminus\{c\}$ and so $\int\log|f'|d\mu_{\alpha,\ell}=+\infty$.
Hence, it follows from Birkhoff and the ergodicity of $\mu_{\alpha,\ell}$ that 
$\lim_n\frac{1}{n}\log(f^n)'(x)=\lim_n\frac{1}{n}\sum_{j=0}^{n-1}\log(f'\circ f^j(x))=\int\log|f'|d\mu_{\alpha,\ell}=+\infty$ for $\mu_{\alpha,\ell}$ almost every $x\in[0,1]$. 
\end{proof}

\begin{proof}[Proof of Corollary~\ref{MainCorAA}]
According to the Remark~\ref{RemarkDense} (see also Lemma~\ref{FolkLemma} in Appendix below), for every $f\in\cl(c,\alpha,\beta)$, one can find $g\in\cl(c,\alpha,\beta)$ arbitrarily close to $f$ satisfying the hypothesis of Theorem~\ref{MainTheoA}.
In the proof of Theorem~\ref{MainTheoA}, the construction of an $g$-probability $\mu$ with infinity Lyapunov exponent and high entropy is based on a full induced map $F$ for $g$.
Let $\mu$ be an ergodic $g$-invariant probability with, say, $h_{\mu}(g)\ge h_{top}(g)-1/(2k)$ and $\int\log g' d\mu\ge2\lambda$, for some $k\in\NN$.
This $\mu$ can be arbitrarily approximated by measures $\mu_n$ supported in $\{R\le n\}$, where $R$ is the induced time of $F$.
Moreover, we have $h_{\mu_n}(g)\ge h_{top}(g)-1/k$ and $\int\log g' d\mu_n\ge\lambda$ for $n$ big enough.
As $F|_{\{R\le n\}}$ is a horseshoe (indeed, $\Lambda=\bigcap_{j\ge0}F^{-j}(\{R\le n\})$ is transitive maximal invariant uniform expanding set), for every $h\in\cl(c,\alpha,\beta)$ close enough to $g$, there is an analytic continuation of $F|_{\{R\le n\}}$ for any fixed $n$.
In particular, an ``analytic continuation'' $\widetilde{\mu}_n$ of $\mu_n$.
Therefore, given $\lambda>0$ and $k\in\NN$, one can find a neighborhood $\cn_k(f)$ of $g$ such that $\sup\{h_{\mu}(h)\,;\,\int\log h'd\mu\ge\lambda\}\ge h_{top}(h)-1/(2k)$ for every $h\in\cn_k(f)$. 
Note that $\cn_k(f)$ is an open set close to $f$. 
Hence,  $\cR(\lambda)=\bigcap_{k\in\NN}\bigcup_{f\in\cl(c,\alpha,\beta)}\cn_k(f)$ is a residual set of $\cl(c,\alpha,\beta)$ in which $h_{top}(h)=\sup\{h_{\mu}(h)\,;\,\int\log h'd\mu\ge\lambda\}$ for every $h\in\cR(\lambda)$.
Therefore, $\cR=\bigcap_{n\in\NN}\cR(n)$ is also residual, concluding the proof.
\end{proof}

\section{Appendix}
\begin{Lemma}[Folklore lemma]\label{FolkLemma}
$\cl_{\per}(c,\alpha,\beta)$ is dense in $\cl(c,\alpha,\beta)$.
\end{Lemma}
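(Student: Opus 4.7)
The plan is to perturb $f$ by independently varying its two singular values until both orbits eventually land on $c$. For $(u,v)$ in a small neighborhood of $(d_0,1-d_1)$, I define rescaled diffeomorphisms $\phi_{0,u}:=(u/d_0)^{1/\alpha}\phi_{0,f}$ and $\phi_{1,v}:=((1-v)/d_1)^{1/\beta}\phi_{1,f}$, and let $f_{u,v}\in\cl(c,\alpha,\beta)$ be the Lorenz map obtained from \eqref{Equationjytrdcvbjk} with these charts. This yields a $C^{1+}$-continuous two-parameter family with $f_{d_0,1-d_1}=f$, $f_{u,v}(c_-)=u$, $f_{u,v}(c_+)=v$, and $\dist(f,f_{u,v})\to 0$ as $(u,v)\to(d_0,1-d_1)$. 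The problem reduces to finding $(u^*,v^*)$ arbitrarily close to $(d_0,1-d_1)$ and integers $n_\pm\ge1$ with $f^{n_-}_{u^*,v^*}(c_-)=c=f^{n_+}_{u^*,v^*}(c_+)$.

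The core argument is a sequential implicit function theorem that exploits the uniform expansion $f'_{u,v}\ge\lambda>1$. First, fix $u=d_0$ and analyze $v\mapsto f^{m}_{d_0,v}(c_+)$: since $\partial_v f_{d_0,v}(c_+)=1$, iterating the chain rule gives $|\partial_v f^{m}_{d_0,v}(c_+)|\gtrsim\lambda^{m-1}$, so for $m$ large the image of this map on any small neighborhood of $1-d_1$ sweeps across $[0,1]$; in particular some $v^*$ nearby satisfies $f^{m}_{d_0,v^*}(c_+)=c$. Non-vanishing of $\partial_v f^{m}(c_+)$ at $(d_0,v^*)$ then gives, via IFT, a $C^1$-curve $v=v(u)$ near $u=d_0$ along which $f^{m}_{u,v(u)}(c_+)\equiv c$. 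Next, restrict to this curve and study $u\mapsto f^{n}_{u,v(u)}(c_-)$: its derivative is $\partial_u f^n(c_-)+v'(u)\,\partial_v f^n(c_-)$, with leading term $|\partial_u f^n(c_-)|\gtrsim\lambda^{n-1}$ (using again $\partial_u f_{u,v}(c_-)=1$ and the chain rule), while $|v'(u)|$ is bounded independently of $n$ by the IFT formula $v'(u)=-\partial_u f^{m}(c_+)/\partial_v f^{m}(c_+)$. Choosing $n\gg m$ ensures the diagonal term dominates, the image covers $[0,1]$, and some $u^*$ nearby satisfies $f^{n}_{u^*,v(u^*)}(c_-)=c$. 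Then $g:=f_{u^*,v(u^*)}$ has $g^{n}(c_-)=c$ and $g^{m}(c_+)=c$; extending $f$ to $c$ via one-sided limits as in the paper's footnote then yields $g^\ell(c_\pm)=c$ for a common multiple $\ell$ of $n$ and $m$, placing $g\in\cl_{\per}(c,\alpha,\beta)$ within any prescribed $\dist$-distance of $f$.

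The main technical obstacle is verifying non-cancellation in the derivative along the curve $v(u)$: if the orbit of $c_-$ under $f_{u,v(u)}$ interacts delicately with the right branch, the terms $\partial_u f^n(c_-)$ and $v'(u)\,\partial_v f^n(c_-)$ could a priori cancel, destroying the 1D expansion. The cleanest way to rule this out is to observe that the constant $v'(d_0)$ depends only on $m$ (and hence is fixed once the first IFT step is done), whereas $\partial_u f^n(c_-)$ grows like $\lambda^{n-1}$ and $|\partial_v f^n(c_-)|\le K\lambda^{n-1}$ with a comparable constant; the set of $n$ for which cancellation occurs to leading order is contained in a codimension-one subset of natural numbers (essentially a single congruence class dictated by the symbolic dynamics of $f$), so a generic large choice of $n$ avoids it. Alternatively, one may bypass the sequential IFT altogether and apply a quantitative two-dimensional inverse function theorem directly to $\Phi_n(u,v):=(f^n_{u,v}(c_-),f^n_{u,v}(c_+))$, whose Jacobian determinant can be shown to be of order $\lambda^{2n}$ on the complement of a thin exceptional set of $(u,v)$, via a parametric transversality argument analogous to those used in the study of unimodal families.
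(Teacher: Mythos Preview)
Your two-parameter IFT strategy is genuinely different from the paper's argument, and the obstacle you yourself flag --- possible cancellation between $\partial_u f^n(c_-)$ and $v'(u)\,\partial_v f^n(c_-)$ --- is a real gap, not a technicality. With your rescaling family one has $\partial_u f_{u,v}(x)=f(x)/d_0\in[0,1]$ for $x<c$ (and $=0$ for $x>c$), and symmetrically $\partial_v f_{u,v}(x)\in[0,1]$ for $x>c$. Writing $x_j=f^j_{u,v}(c_-)$, the recursion $\partial_u x_{j+1}=f'(x_j)\,\partial_u x_j+a_j$ with $a_j\in[0,1]$ gives $\partial_u x_n\ge D_n:=\prod_{j=1}^{n-1}f'(x_j)$ and $\partial_v x_n\le D_n/(\lambda-1)$; the analogous computation for $y_j=f^j_{u,v}(c_+)$ gives $|v'(u)|=\partial_u y_m/\partial_v y_m\le 1/(\lambda-1)$. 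Combining,
\[
\partial_u x_n-|v'(u)|\,\partial_v x_n\;\ge\;D_n\Big(1-\tfrac{1}{(\lambda-1)^2}\Big),
\]
which is positive only when $\lambda>2$. For $\lambda\in(1,2]$ these bounds allow full cancellation, and nothing you wrote excludes it: the assertion that bad $n$ form ``a single congruence class'' is unsupported (the ratio $\partial_v x_n/\partial_u x_n$ depends on the entire itinerary of $x_1,\dots,x_{n-1}$, not on $n$ modulo anything), and the $2$D Jacobian of $\Phi(u,v)=(f^n(c_-),f^m(c_+))$ satisfies the \emph{same} lower bound $D_nD'_m\big(1-1/(\lambda-1)^2\big)$, so that alternative faces the identical obstruction.

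The paper avoids this interaction altogether by localizing the perturbation. It fixes a small nice interval $J\ni c$, so that $\Lambda=\{x:\co_f^+(x)\cap J=\emptyset\}$ is an expanding Cantor set whose gaps $I$ satisfy $f^{n(I)}(I)=J$ while $f^j(I)\cap J=\emptyset$ for $0\le j<n(I)$. One perturbs $f$ \emph{only on $J\cap(0,c)$} to move $f(c_-)$ onto the point $(f^{r}|_R)^{-1}(c)$ in the gap $R$ near $f(c_-)$; since the new orbit $g(c_-),\dots,g^{r}(c_-)$ lies entirely outside $J$, it coincides with an $f$-orbit and lands on $c$. A second perturbation on $J\cap(c,1)$ then fixes $c_+$ without touching the left branch or the already-arranged orbit of $c_-$. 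The two steps are completely decoupled --- precisely what your global rescaling family lacks. If you want to repair your route, replace $f_{u,v}$ by a family supported inside a nice interval $J$; then $\partial_v f^n(c_-)$ vanishes along the relevant orbit segment and cancellation is impossible --- but at that point you have essentially reproduced the paper's proof.
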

\begin{proof}[Sketch of the proof]
Given $f\in\cl(c,\alpha,\beta)$ and $\varepsilon>0$, let $J=(a,b)$ be a nice interval containing $c$. That is, $\co_f^+(f(a))\cap J=\emptyset=\co_f^+(f(b))\cap J$ $($\footnote{ The existence of nice intervals containing singularity with arbitrarily small diameter is well know.
For instance, as $\per(f)$ is dense in $[f(c_+),f(c_-)]$, taking $p\in\per(f)\cap(c-\varepsilon,c)$ and $q\in\per(f)\cap(c,c+\varepsilon)$, we get that $(a,b)$ is a nice interval containing in $(c-\varepsilon,c+\varepsilon)$, where $a=\max \big(\co_f^+(p)\cup\co_f^+(q)\big)\cap(0,c)$ and $b=\min\big(\co_f^+(p)\cup\co_f^+(q)\big)\cap(c,1)$.}$)$.
It is easy to check that $\Lambda=\{x\in[0,1]\,;\,\co_f^+(x)\cap J=\emptyset\}$ is an uniformly expanding set.
Moreover, changing the nice interval if necessary, we may assume that $\Lambda$ is a Cantor set.
One can use the {\em homterval lemma} (see for instance Lemma~3.1 at \cite{dMvS}) to verify that for each gap $I$ of $\Lambda$, i.e., a connected component of $[0,1]\setminus\Lambda$, there exists a unique $n(I)\ge0$ such that
\begin{equation}\label{Equationjhfyf}
f^{n(I)}(I)=J\text{ and }f^{j}(I)\cap J=\emptyset\;\;\forall\,0\le j<n(I).
\end{equation}

First assume that exists a gap $R$  of $\Lambda$ such that $f(c_-)\in\overline{R}$ and, by \eqref{Equationjhfyf}, set $r:=n(R)$ (see Figure~\ref{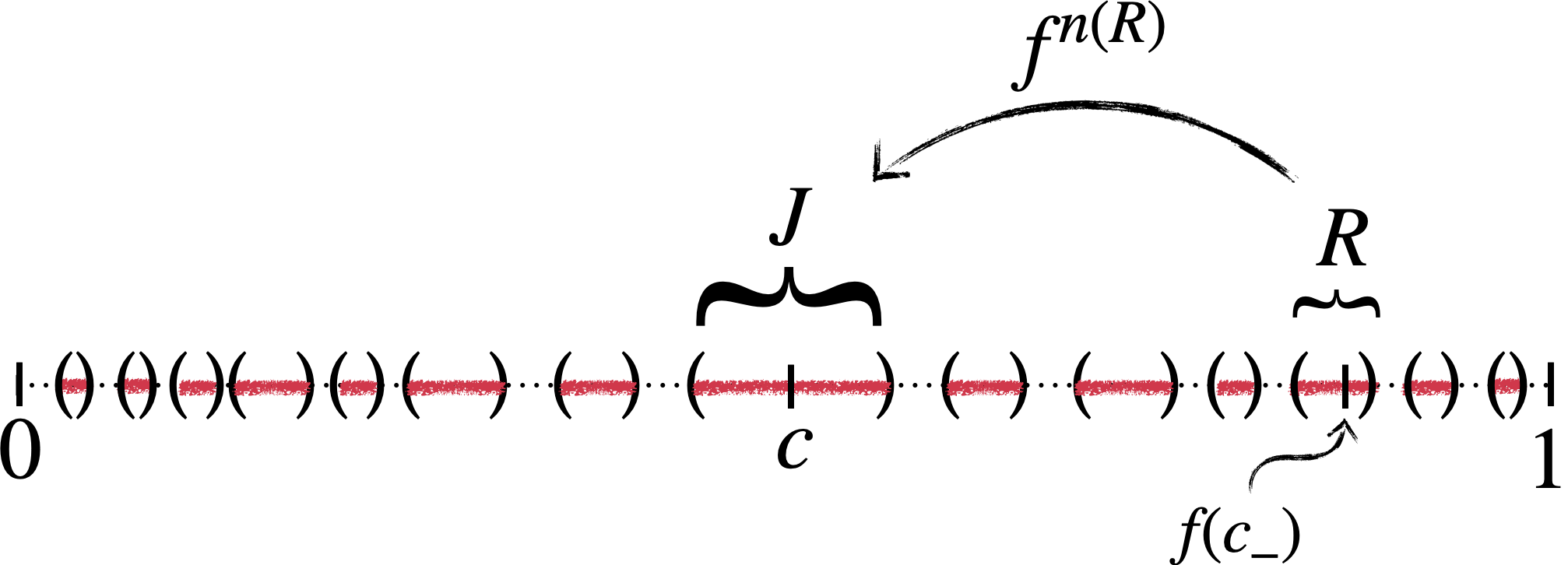}).
\begin{figure}
  \begin{center}\includegraphics[scale=.3]{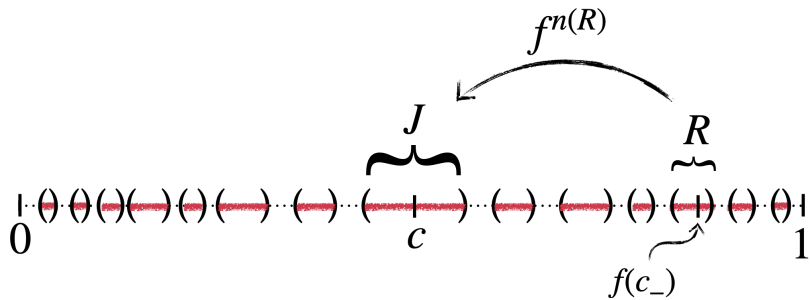}\\
  \caption{\small In this figure on can see the gaps of the Cantor set $\Lambda=\{x\,;\,\co_f^+(x)\cap J=\emptyset\}$, where $J$ is a nice interval containing $c$.
  The gap $R$ contains the singular value $f(c_-)$ and $f^{n(R)}$ sends $R$ onto $J$ diffeomorphically.}\label{Deslocamento1.png}
  \end{center}
\end{figure}
As in Figure~\ref{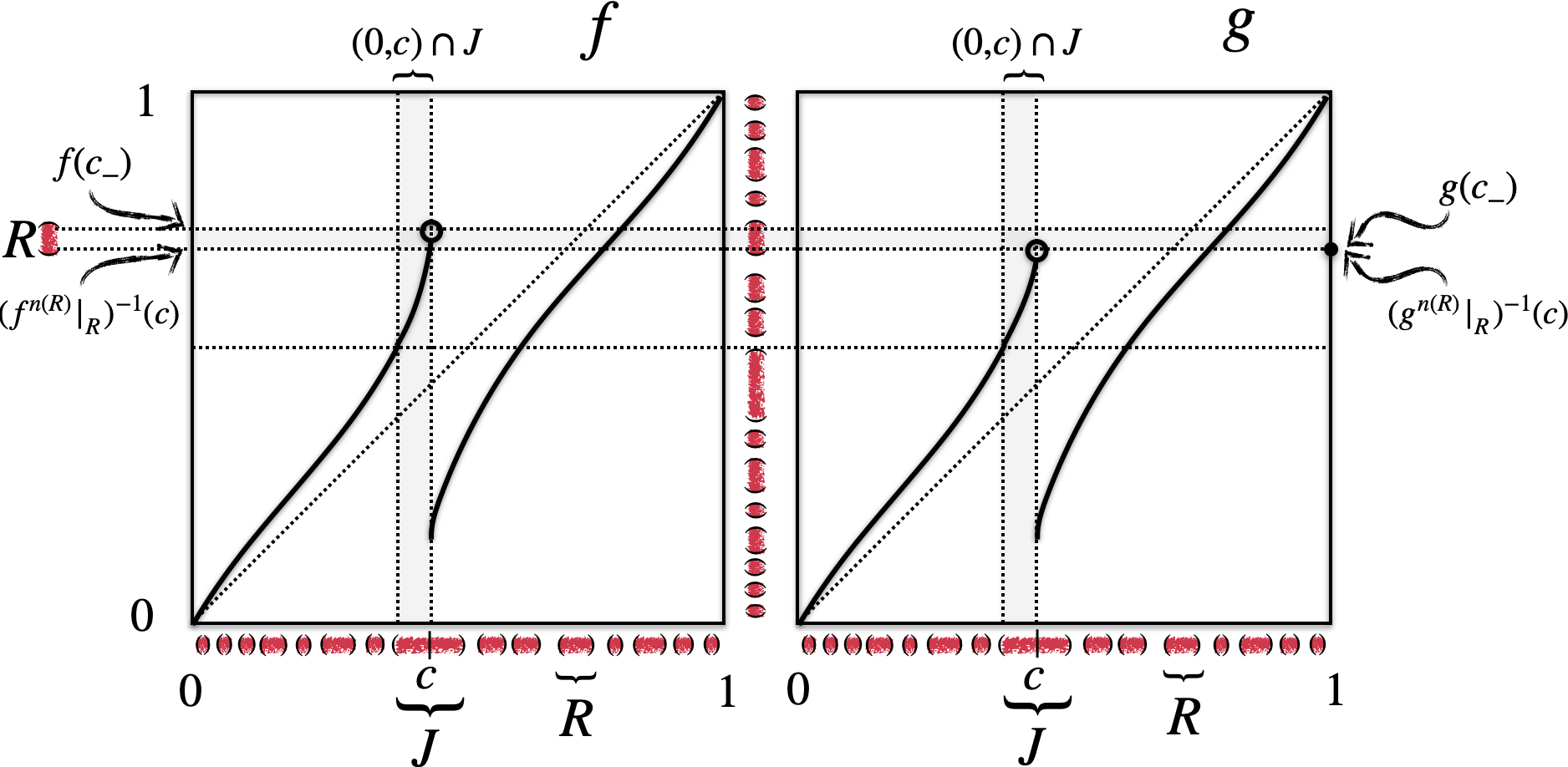}, one can smoothly perturb $f$ to get a map $g\in\cl(c,\alpha,\beta)$ such that $g(x)=f(x)$ for $x\notin J\cap(0,c)$ and $g(c_-)=(f^r|_{R})^{-1}(c)$.
\begin{figure}
  \begin{center}\includegraphics[scale=.4]{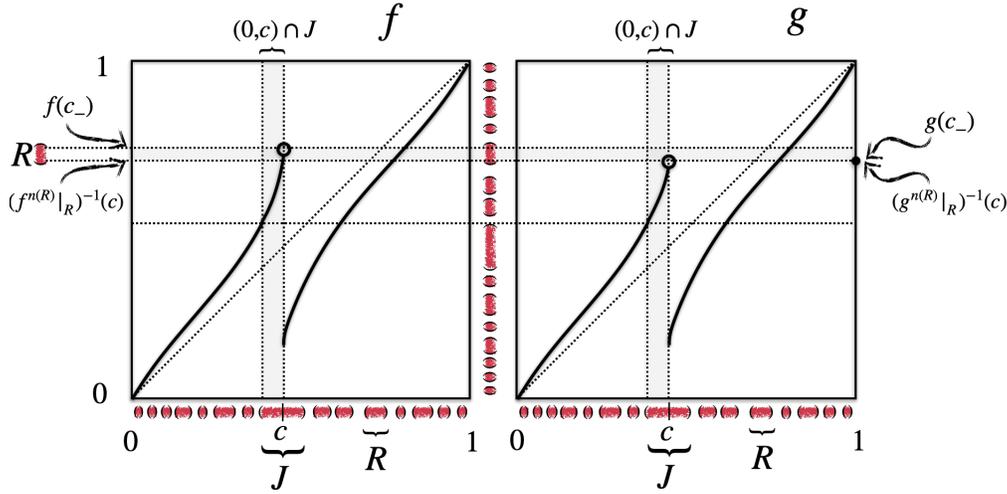}\\
  \caption{\small On the left side we have the initial Lorenz map $f$ (the unperturbed on).
  On the right, one can see the map $g$ close to $f$ having this map is extended to present two fixed points at the end of the interval.}\label{Perturbacao.png}
  \end{center}
\end{figure}
Therefore, $g^{r+1}(c_-)=c$.  
As $r=n(R)\to+\infty$ when $\varepsilon\to0$, we get that $\gamma:=\inf\{(f^r)'(x)\,;\,x\in R\}\to+\infty$.
In particular $|f(c_-)-g(c_-)|\le |R|\le\frac{1}{\gamma}|J|\le\varepsilon$ for $\varepsilon$ small enough.
This also ensures that $\dist(f,g)\to0$ when $\varepsilon\to0$.

If $f(c_-)$ does not belongs to the closure of a gap, then there exists a sequence $R_k$ of gaps of $\Lambda$ such that $\diameter(R_k)\to0$ and $d(f(c_-),R_k):=\inf\{|f(c_-)-x|\,;\,x\in R_k\}\to0$ (see Figure~\ref{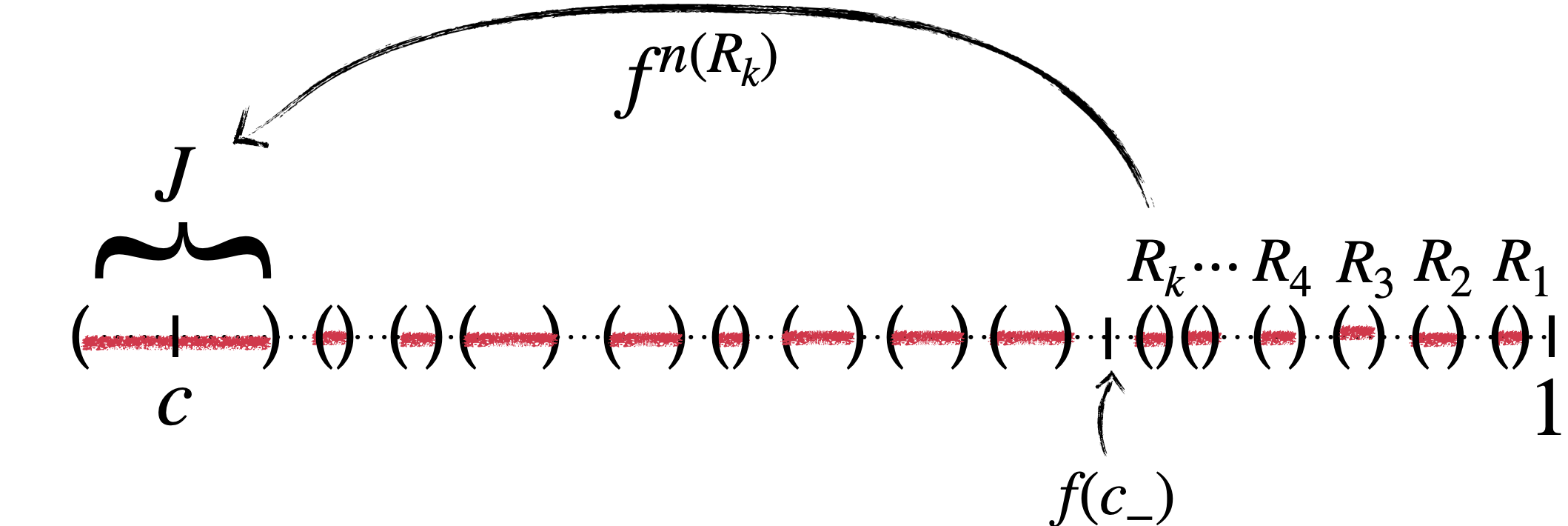}).
\begin{figure}
  \begin{center}\includegraphics[scale=.3]{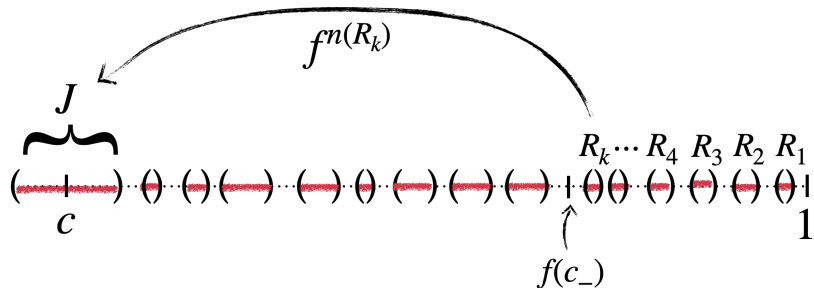}\\
  \caption{\small This figure is similar to Figure~\ref{Deslocamento1.png}, but here the singular value $f(c_-)$ is not contained in the closure of a gap of $\Lambda$. In this case, $f(c_-)$ is accumulated by a sequence $R_k$ of gaps of $\Lambda$.}\label{Deslocamento2.png}
  \end{center}
\end{figure}
Thus, taking $k$ so that $d(f(c_-),R_k)<\varepsilon$ and $g$ as before, that is, $g(x)=f(x)$ for $x\notin J$ and $g(c_-)=(f^r|_{R_k})^{-1}(c)$, we have again that $g^{r+1}(c_-)=c$ and that $\dist(f,g)$ can be as small as we want when $\varepsilon$ goes to zero.

After perturbing the left side of the singularity to obtain $g$ close to $f$ with $g^{r+1}(c_-)=c$, we can, by it turn, perturb $g$ on the right side of $c$ to obtain $h$ close to $g$, and thus close to $f$ and such that $h^{\ell+1}(c_+)=c$ and $h^{r+1}(c_-)=g^{r+1}(c_-)=c$.
Finally, taking $m=(r+1)(\ell+1)$, we have that $h$ is close to $f$ and $h^m(c_-)=c=h^m(c_+)$, completing the proof.
\end{proof}

\end{document}